\date{\scriptsize   Received: , Accepted: .}
\newtheorem{theorem}{Theorem}[section]
\newtheorem{proposition}[theorem]{Proposition}
\newtheorem{lemma}[theorem]{Lemma}
\newtheorem{corollary}[theorem]{Corollary}
\theoremstyle{definition}
\newtheorem{definition}[theorem]{Definition}
\newtheorem{example}[theorem]{Example}
\theoremstyle{remark}
\newtheorem{remark}[theorem]{Remark}
\numberwithin{equation}{section}
\begin{document}


\title[Topological Entropy and Recurrence  Properties in NDS]{Topological Entropy and Recurrence  Properties in Non-autonomous Dynamical Systems}

\author[Mehdi Fatehi Nia]{ {{\bf {Mehdi Fatehi Nia}}
\\{\tiny{Department of Mathematics, Yazd University, 89195-741 Yazd, Iran}
\\e-mail: fatehiniam@yazd.ac.ir }}}


%

 \maketitle
%

\begin{abstract}
In this paper we study topological entropy and recurrence properties of non-autonomous dynamical system generated by a family of continuous self maps on a compact space $X$. Specially, we introduce the pseudo-entropy and periodic-pseudo-entropy   and prove their equivalence with the topological entropy for non-autonomous dynamical systems.\\
\textbf{Keywords:} Non-autonomous dynamics, non-wandering, topological entropy, pseudo orbits, chain recurrent, chain mixing.  \\
\textbf{MSC(2010):}  Primary: 37C50; Secondary: 37C15.
\end{abstract}

\section{\bf Introduction}
The notion of topological entropy is a well known tool for
measure the complexity of dynamical systems. Topological entropy was introduced
by Adler et al (\cite{[AM]}) and later extended
by Bowen (\cite{[BA]}). After these works many
 research articles appeared on different dynamical systems
and computation methodology of topological entropy. In \cite{[BS]}, the authors prove that the topological entropy of a map is equal to the exponential growth rate of the number of separated (periodic) pseudo orbits. In \cite{[RW]}, the authors introduced the notions of chain mixing  rate and making use of this notion gave a lower bound for topological entropy. \\
In the recent years, non-autonomous discrete dynamical systems
have been extensively studied by many researchers \cite{[CJ], [F],[FN],[KMS],[TD],[Y]}. The main difference between autonomous and non-autonomous systems is that the
basic elements dictating the dynamics are fixed in the former, and change over time in the
latter. 
Specially,  Kolyada and Snoha \cite{[KMS]} introduced topological
entropy for a non-autonomous dynamical system given by a sequence of continuous self-maps of a compact metric space. Recently, Kawan introduced the notion of metric entropy for a non-autonomous dynamical system which   is related via a variational inequality
to the topological entropy of non-autonomous systems as
defined by Kolyada and Snoha. In this way they   generalized several properties of the
classical metric entropy such as  Rokhlin inequality and power rule  to non-autonomous dynamical systems \cite{[CK]}. In \cite{[CJ],[FA],[MO],[TDB]}, the authors  have studied main notions in discrete dynamical systems such as; non-wandering sets, shadowing, chain recurrent,  topological stability,  and  expansiveness  for non-autonomous discrete dynamical systems induced by a sequence of continuous
on a compact metric space.\\
Chain recurrent sets and non-wandering sets have an important role in the study of ergodic properties
of dynamical systems.  In \cite{[TD]}, Thakkar and Das consider the chain recurrent sets of non-autonomous dynamical systems and study chain recurrent sets in a non-autonomous discrete system with the shadowing property. Also, in \cite{[TDA]}  they defined and studied non-wandering set, $\alpha-$limit set, $\omega-$limit
set and recurrent set for non-autonomous discrete dynamical systems.\\
We use briefly \emph{NDS} to denote the non-autonomous dynamical systems.\\
\\In this paper, first we present some definitions and resulsts
that will be used in the sequel. In Section $3,$ we review the work of Thakkar and Das on non-wandering sets and recurrent points for  \textit{NDS}'s and prove that topological entropy of an equi-continuous \textit{NDS} is equal to the topological entropy of this \textit{NDS} restricted to its chain recurrent set. In the rest, shadowing property for a \textit{NDS} considered and this is proved that for an equi-continuous \textit{NDS} with  the shadowing property every recurrent point is a chain recurrent point. Section $4$ is the main part of the paper. There, we introduce pseudo-entropy and periodic-pseudo entropy for non-autonomous dynamical systems and prove the equivalence of these notions and topological entropy for \textit{NDS}'s. This section is a generalization of the results presented in \cite{[BS]}. Finally, in the last section, we study chain mixing and topological mixing in  \textit{NDS}'s and  a lower boundary for topological entropy is  evaluated.
\section{Preliminaries}
In this section, we recall the notion of entropy for a  \textit{NDS}, as defined in Kolyada et al. \cite{[KMS]}.
As in the classical autonomous dynamical systems, the  definition of topological entropy using open covers and the definition using separated/spanning sets are coincide.\\
Let $X$ be a compact topological space and $\mathcal{F}=\{f_{i}\}_{i=1}^{\infty}$  a sequence of
continuous maps from $X$ to $X$. For any positive integers, $i$, $n$; set $\mathcal{F}_{[i,n]}=f_{i+(n-1)} o~f_{i+(n-2)}o~....o~f_{i+1} o~f_{i}$ and additionally $\mathcal{F}_{[i,0]}=id$. We also write $\mathcal{F}_{[i,-n]}=(\mathcal{F}_{[i,n]})^{-1}$ \cite{[KMS]}. For case $i=1$ we will use $\mathcal{F}_{n}$. The pair $(X,\mathcal{F})$ is called a \emph{non-autonomous
discrete dynamical system}. The trajectory of a point $x\in X$ is the sequence $(\mathcal{F}_{n}(x))_{n\geq 0}$.\\
Now we consider the \emph{topological entropy} for a non-autonomous dynamical system.\\ Let $(X,d)$ be a compact metric space. For each $n\geq 1$ the function $d_{n}(x,y)=max\{d(\mathcal{F}_{j}(x),\mathcal{F}_{j}(y)): 0\leq j\leq n-1\}$ is a metric on $X$ and equivalent to $d$.\\ A subset $E$ of $X$ is called $(n,\mathcal{F},\epsilon)-$separated if for any two distinct point $x,y\in E$, $d_{n}(x,y)>\epsilon$. A subset $F$ of $X$ is called $(n, \mathcal{F},\epsilon)-$spanning if for each $x\in X$ there is $y\in F$ for which $d_{n}(x,y)\leq \epsilon$ \cite{[KMS]}.\\
We define $s_n( \mathcal{F},\epsilon)$  as the maximal cardinality of a $(n, \mathcal{F},\epsilon)-$seperated
set and $r_n( \mathcal{F},\epsilon)$  as the minimal cardinality of a set which is a
$(n,\mathcal{F},\epsilon)-$spanning set.
 The topological entropy $h(\mathcal{F})$ of the system $(X,\mathcal{F})$ is defined by $$h(\mathcal{F})=\lim_{\epsilon\rightarrow 0}\lim_{n\rightarrow \infty}\frac{1}{n}\log s_n( \mathcal{F},\epsilon)=\lim_{\epsilon\rightarrow 0}\lim_{n\rightarrow \infty}\frac{1}{n}\log r_n( \mathcal{F},\epsilon).$$
Now, we consider the other definition for topological entropy of non-autonomous dynamical systems which is based on open covers.\cite{[KMS]}\\
For open covers $\mathcal{A}_{1},\mathcal{A}_{2},\cdots, \mathcal{A}_{n}$ we denote:\\
$\displaystyle\bigvee_{i=1}^{n}\mathcal{A}_{i}=\mathcal{A}_{1}\vee\mathcal{A}_{2}\vee\cdots\vee\mathcal{A}_{n}=\{A_{1}\cap A_{2}\cap \cdots\cap A_{n}:A_{1}\in\mathcal{A}_{1},A_{2}\in\mathcal{A}_{2},\cdots A_{n}\in\mathcal{A}_{n}\}.$\\
This is clear that $\displaystyle\bigvee_{i=1}^{n}\mathcal{A}_{i}$ is also an open cover for $X$. For an open cover $ \mathcal{A}$, let $\mathcal{F}_{[i,-n]}(\mathcal{A})=\{\mathcal{F}_{[i,-n]}(A):A\in \mathcal{A}\}$ and $\mathcal{A}_{i}^{n}=\displaystyle\bigvee_{j=0}^{n-1}\mathcal{F}_{[i,-j]}(\mathcal{A})$. Let $\mathcal{N}(\mathcal{A})$ denote the minimal possible cardinality of a subcover chosen from $\mathcal{A}$. Then $$h^{*}(\mathcal{F},\mathcal{A})=\displaystyle\limsup_{n\rightarrow \infty}\frac{1}{n}\log\mathcal{N}(\mathcal{A}_{1}^{n})$$
is said to be the topological entropy of $\mathcal{F}$ on the open cover $\mathcal{A}$.\\
Let $$h^{*}(\mathcal{F})=\sup\{h^{*}(\mathcal{F},\mathcal{A}): \mathcal{A} ~is ~an ~open~cover ~of ~X\}.$$
By Lemma 3.1 of \cite{[KMS]}, $h^{*}(\mathcal{F})=h(\mathcal{F}).$\\
This is well known  that entropy is an important invariant of topological
conjugate, In \cite{[CK]} the authors introduce conjugated non-autonomous systems and prove that topological entropy for non-autonomous dynamical systems is an invariant of topological conjugacy.
\begin{definition}\label{dem}\cite{[CK]}
Let $(X,\mathcal{F})$ and $(Y,\mathcal{G})$ be two non-autonomous system. We say that $(X,\mathcal{F})$ and $(Y,\mathcal{G})$ are conjugated, if for every $n\in N$ there exists a homoeomorphism map $\pi_{n}:X\longrightarrow Y$ such that $\pi_{n+1}of_{n}=g_{n}o\pi_{n}$. If the maps $\pi_{n}$ are only continuous then we say that the systems $(X,\mathcal{F})$ and $(Y,g_{1, \infty})$ are semi-conjugate.
\end{definition}
\begin{proposition} \cite{[CK]}\label{pa}
Let $(X,\mathcal{F})$ be semi-conjugated to $(Y,\mathcal{G})$. Then $h(\mathcal{F})\leq h(\mathcal{G})$.
\end{proposition}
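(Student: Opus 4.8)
The plan is to show that semi-conjugacy allows us to transport spanning sets from the target system $(Y,\mathcal{G})$ back to the source system $(X,\mathcal{F})$, thereby controlling $r_n(\mathcal{F},\epsilon)$ in terms of the spanning quantities for $\mathcal{G}$. Let $\pi_n:X\to Y$ be the continuous maps realizing the semi-conjugacy, so that $\pi_{n+1}\circ f_n = g_n\circ \pi_n$. The first step is to iterate this intertwining relation to obtain the clean identity $\pi_{n+1}\circ \mathcal{F}_n = \mathcal{G}_n\circ \pi_1$ for every $n\geq 0$; this is a straightforward induction on $n$ using the definition of $\mathcal{F}_n$ and $\mathcal{G}_n$ as compositions, and it is the algebraic backbone of the argument. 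I would likely want to restate the commutation in the form relating the $n$-th trajectory maps rather than the one-step maps, since the entropy is defined through the metrics $d_n$ built from the iterates $\mathcal{F}_j$.

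The main analytic obstacle is that the $\pi_n$ are merely continuous, not isometric, so they need not respect the Bowen metrics $d_n$ exactly; we can only use uniform continuity on the compact space $X$ (or rather, I should be careful, on $Y$ in the reverse direction). The standard way around this is the following. Fix $\epsilon>0$. Because $Y$ is compact, and because we need to pull spanning sets on $Y$ back to spanning sets on $X$, the cleaner route is to push an $(n,\mathcal{F},\delta)$-separated set in $X$ forward via $\pi_1$ and argue that its image is $(n,\mathcal{G},\epsilon)$-separated, provided $\delta$ is chosen via the uniform continuity of $\pi_1$. Concretely, I would invoke uniform continuity of $\pi_1$ (valid since $X$ is compact) to find $\delta>0$ such that $d_X(x,x')\le \delta$ implies $d_Y(\pi_1 x,\pi_1 x')\le\epsilon$. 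The delicate point, and the step I expect to fight with, is matching the $d_n$-metrics on the two sides: I must verify that if $\pi_1 x$ and $\pi_1 x'$ are $d_n^{Y}$-close then $x,x'$ are $d_n^{X}$-close, which requires uniform continuity applied simultaneously across all the iterates $j=0,\dots,n-1$, together with the commutation identity $\pi_{j+1}\circ\mathcal{F}_j = \mathcal{G}_j\circ\pi_1$.

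With the metric comparison in hand, the combinatorial core is short. Given an $(n,\mathcal{F},\delta)$-separated set $E\subseteq X$ of maximal cardinality $s_n(\mathcal{F},\delta)$, I would show its image $\pi_1(E)\subseteq Y$ is $(n,\mathcal{G},\epsilon)$-separated after reducing $\delta$ appropriately, whence
\[
s_n(\mathcal{F},\delta)\ \le\ s_n(\mathcal{G},\epsilon).
\]
Here I must handle the possibility that $\pi_1$ is not injective, so two distinct points of $E$ could collapse; the separation hypothesis together with the metric comparison rules this out precisely when $\delta$ is small enough relative to $\epsilon$, which is exactly what the uniform continuity estimate guarantees. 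Taking $\tfrac{1}{n}\log$, letting $n\to\infty$, and then letting $\epsilon\to 0$ (with $\delta\to 0$ correspondingly) yields
\[
h(\mathcal{F})=\lim_{\epsilon\to 0}\lim_{n\to\infty}\tfrac{1}{n}\log s_n(\mathcal{F},\delta)\ \le\ \lim_{\epsilon\to 0}\lim_{n\to\infty}\tfrac{1}{n}\log s_n(\mathcal{G},\epsilon)=h(\mathcal{G}),
\]
which is the claimed inequality. The only genuine care needed is tracking the dependence of $\delta$ on $\epsilon$ through the uniform continuity of the single map $\pi_1$ and confirming that this dependence survives the two successive limits; everything else is bookkeeping with the definitions of $s_n$ and the Bowen metrics recalled in Section 2.
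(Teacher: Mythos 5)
Your central combinatorial step is wrong-directional, and this is a genuine gap rather than a repairable delicacy. You push an $(n,\mathcal{F},\delta)$-separated set $E\subseteq X$ forward through $\pi_1$ and claim $\pi_1(E)$ is $(n,\mathcal{G},\epsilon)$-separated by uniform continuity of $\pi_1$. But uniform continuity gives exactly the opposite implication: $d^X$-closeness of $x,x'$ forces $d^Y$-closeness of $\pi_1x,\pi_1x'$; its contrapositive says separated sets \emph{pull back} under $\pi_1$, not that they push forward. The step you yourself flag as delicate --- ``if $\pi_1x$ and $\pi_1x'$ are $d_n^Y$-close then $x,x'$ are $d_n^X$-close'' --- is uniform continuity of an inverse of $\pi_1$, which a merely continuous semi-conjugacy does not possess, and no choice of $\delta$ rules out collapsing: if $Y$ is a single point $\{y_0\}$ (or each $\pi_n$ is constant onto a common fixed point of $\mathcal{G}$), the relations $\pi_{n+1}\circ f_n=g_n\circ\pi_n$ hold trivially, and your inequality $s_n(\mathcal{F},\delta)\le s_n(\mathcal{G},\epsilon)$ would force $h(\mathcal{F})=0$ for every system. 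This also shows the proposition is false under Definition \ref{dem} as literally printed (continuous $\pi_n:X\to Y$, no surjectivity or injectivity); the intended reading, and the one the paper actually uses in Lemma \ref{lec} where the maps go from the shift space $X_0$ \emph{onto} $X$, is the factor inequality of \cite{[CK]}: continuous surjections $\pi_n:Y\to X$ with $f_n\circ\pi_n=\pi_{n+1}\circ g_n$, after which one pushes \emph{spanning} sets forward from $Y$ to $X$ (spanning sets push forward under surjections; separated sets pull back).

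Even after reorienting the argument, there is a second, specifically non-autonomous gap: uniform continuity of the single map $\pi_1$ is not enough. The iterated commutation relation at time $j$ involves a different conjugating map at each step ($\pi_{j+1}\circ\mathcal{F}_j=\mathcal{G}_j\circ\pi_1$ in your orientation), so comparing the Bowen metrics $d_n$ on the two sides requires one modulus of continuity valid for all $\pi_{j+1}$, $0\le j\le n-1$, with $n\to\infty$; that is, equicontinuity of the family $\{\pi_n\}_{n\ge 1}$, which is precisely the standing hypothesis under which \cite{[CK]} proves this statement. Note the paper itself offers no proof --- Proposition \ref{pa} is quoted from \cite{[CK]} --- so your attempt is measured against that source: your limit bookkeeping at the end is fine, but without surjectivity in the correct direction and equicontinuity of the whole family of conjugating maps, the argument cannot be completed.
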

\section{Non-wandering and chain recurrent sets}
Dynamical systems with some kind of orbit's recurrence have been always attractive to researchers and various notions of recurrence have been studied in last years.  In this section we consider the main ones: non-wandering sets, periodic points, chain recurrent sets and recurrent sets for non-autonomous dynamical systems.
\begin{definition}\cite{[TDA]}\label{deb}
Let $(X,d)$ be a metric space and $f_{k}:X\longrightarrow X$ be a sequence of homeomorphisms, $k=1,2,\cdots$. A point $x\in X$ is said to be a non-wandering point for $(X,\mathcal{F})$ if for any neighborhood $U$ of $X$ and for any $n\geq 0$ there exists $m\geq n$ and $r\geq 0$ such that $\mathcal{F}_{[m,r]}(U)\cap U\neq \emptyset$. The set of all non-wandering points is denoted by $\Omega(\mathcal{F})$.
\end{definition}
Theorems 2.1 and 2.2 in \cite{[TDA]} show that if $X$ be compact then $\Omega(\mathcal{F})$ is a nonempty and closed set.
\begin{definition}\cite{[TDA]}\label{dec}
Let $(X, d)$ be a metric space and $f_n : X \longrightarrow X$ be a sequence
of homeomorphisms, $n = 0, 1, 2, ....$ A point $x_0\in X$ is said to be periodic point
of \textit{NDS} $ (X,\mathcal{F})$
 if the orbit of $x_{0}$ is periodic, i.e.
there exists an integer $k > 0$ such that $\mathcal{F}_{ik+j}(x_{0}) = \mathcal{F}_{j}(x_{0})$, for every $i \in \mathbb{N}$ and
$0 \leq j < k$. The set of all periodic points of $\mathcal{F}$ is denoted by $Per(\mathcal{F})$.
\end{definition}
Similarly, a point $x\in X$ is said to be a fixed point of $ (X,\mathcal{F})$
if $f_{n}(x)=x$ for all $n\geq 0$.\\
In \cite{[TDA]}, the authors prove that $ Per(\mathcal{F})\subset\Omega(\mathcal{F})$.
\begin{definition} \cite{[TDA]}\label{ded}
Let $(X,\mathcal{F})$ be a \textit{NDS}.  By $\alpha-$limit set of a point $x\in X$, we mean the
set $$\alpha(x) = \{y \in X| \lim_{k\rightarrow \infty}
d(\mathcal{F}_{n_{k}}(x), y) = 0\},$$
where $\{n_k\}$ is some strictly decreasing sequence of negative integers.\\
Similarly, by $\omega-$limit set of a point $x\in X$, we mean the
set $$\omega(x) = \{y \in X| \lim_{k\rightarrow \infty}
d(\mathcal{F}_{n_{k}}(x), y) = 0\},$$
where $\{n_k\}$ is some strictly increasing sequence of positive integers.
\end{definition}
A point $x \in X$  is said to be recurrent if
$x\in \alpha(x)\cap \omega(x)$. We denote the set of all recurrent points of $\mathcal{F}$ by $R(\mathcal{F})$ and the closure of it by $C(\mathcal{F})$ \cite{[TDA]}.
\begin{remark}\label{rea}
By Theorem 2.5 of \cite{[TDA]}, if $X$ is compact then for any $x\in X$, $\alpha(x)\subseteq \Omega(\mathcal{F})$ and $\omega(x)\subseteq \Omega(\mathcal{F})$. Then
we have $Per(\mathcal{F})\subset R(\mathcal{F})\subseteq \Omega(\mathcal{F})$ and $C(\mathcal{F})\subseteq\Omega(\mathcal{F})$.
\end{remark}
\begin{definition} \cite{[TD]}\label{def}
 Let $ (X,\mathcal{F})$ be a \textit{NDS}. A point
$x \in X$ is said to be a chain recurrent point for $\mathcal{F}$ if for any $\delta > 0$ and any $n \geq 0$, there exist $m \geq n$ and a finite
sequence $\{x_i\}_{i=0}^{k}$
of points of $X$ with $x_0 = x_k = x$ such that $d(f_{m+i}(x_i), x_{i+1}) < \delta$ or $d(f^{-1}
_{m+i}
(x_i), x_{i+1}) < \delta$ for all
$i = 0, 1, . . . , k - 1$. The sequence $\{x_i\}_{i=0}^{k}$
is said to be a $\delta-$chain for $x$ with action starting at $m$. The set of all
chain recurrent points of $\mathcal{F}$ is denoted by $CR(\mathcal{F})$.
\end{definition}
Recall that  the sequence  $\{h_{n}\}_{n\geq 1}$ of homeomorphisms on $X$ is said to be \emph{equi-continuous}, if for any $\epsilon>0$ there exists a constant $\delta>0$ such that such that $d(h_{n}(x),h_{n}(y))<\epsilon$ for all $x,y \in X$ with $d(x,y)<\delta$ and for all $n\geq 1$ \cite{[SA]}.\\
 So we have the following theorem:
\begin{theorem}\label{ta}
Let  $\mathcal{F}$ be an equi-continuous \textit{NDS} then $h(\mathcal{F})=h(\mathcal{F},CR(\mathcal{F})).$
\end{theorem}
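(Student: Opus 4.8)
The plan is to prove the two inequalities separately, the inequality $h(\mathcal{F},CR(\mathcal{F}))\le h(\mathcal{F})$ being the routine one. First I would record, using the analysis of chain recurrence in \cite{[TD]}, that $CR(\mathcal{F})$ is a closed (hence compact) subset of $X$ that is invariant under the maps $f_{i}$, so that the restricted system and the symbol $h(\mathcal{F},CR(\mathcal{F}))$ are legitimately defined through separated/spanning sets lying inside $CR(\mathcal{F})$. Since every $(n,\mathcal{F},\epsilon)$-separated subset of $CR(\mathcal{F})$ is a fortiori an $(n,\mathcal{F},\epsilon)$-separated subset of $X$, the separation numbers computed inside $CR(\mathcal{F})$ never exceed $s_{n}(\mathcal{F},\epsilon)$; passing to the defining limits gives $h(\mathcal{F},CR(\mathcal{F}))\le h(\mathcal{F})$.

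The substance is the reverse inequality, and the engine I would build is a uniform \emph{bounded excursion} estimate: for every open neighbourhood $V$ of $CR(\mathcal{F})$ there is an integer $N=N(V)$ such that for every $x\in X$ the forward trajectory $(\mathcal{F}_{j}(x))_{j\ge 0}$ lies outside $V$ for at most $N$ values of $j$. To obtain this I would first verify the inclusion $\Omega(\mathcal{F})\subseteq CR(\mathcal{F})$: given $x\in\Omega(\mathcal{F})$, the condition $\mathcal{F}_{[m,r]}(U)\cap U\ne\emptyset$ of Definition \ref{deb} supplies an exact orbit segment returning near $x$, and equi-continuity of $\mathcal{F}$ lets me splice it into a genuine $\delta$-chain for $x$ in the sense of Definition \ref{def} with action starting arbitrarily late, so $x\in CR(\mathcal{F})$. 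Consequently a neighbourhood $V$ of $CR(\mathcal{F})$ also contains $\Omega(\mathcal{F})$, so the compact set $X\setminus V$ consists entirely of wandering points. Covering $X\setminus V$ by finitely many neighbourhoods $U_{y_{1}},\dots,U_{y_{m}}$, each of which (by the negation of the non-wandering condition) a trajectory can re-enter at most once after a fixed time, and adding these bounds together with the finite warm-up interval, yields the uniform constant $N$.

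Granting this estimate I would finish with a counting comparison in the metric $d_{n}$. Fix $\epsilon>0$, take $V$ to be a small neighbourhood of $CR(\mathcal{F})$, and let $E$ be a maximal $(n,\mathcal{F},\epsilon)$-separated subset of $X$. For each $x\in E$ the set of bad times $j\in\{0,\dots,n-1\}$ with $\mathcal{F}_{j}(x)\notin V$ has at most $N$ elements; recording the positions of these bad times and, through a fixed finite $\tfrac{\epsilon}{2}$-net of $X$, the approximate locations of the orbit there, costs at most $(\sum_{k\le N}\binom{n}{k})\,c^{N}$ possibilities, a factor polynomial in $n$. Two points of $E$ sharing the same bad data must therefore be $d_{n}$-separated by coordinates close to $CR(\mathcal{F})$, so each such class injects, up to bounded distortion, into an $(n,\mathcal{F},\tfrac{\epsilon}{2})$-separated set built from $CR(\mathcal{F})$. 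Taking $\tfrac1n\log$, letting $n\to\infty$ and then $\epsilon\to0$, the polynomial factor disappears and one is left with $h(\mathcal{F})\le h(\mathcal{F},CR(\mathcal{F}))$, which closes the argument. The step I expect to be the true obstacle is precisely this last comparison: turning proximity to $CR(\mathcal{F})$ at the good times into a genuine $d_{n}$-bound uniformly in $n$ requires controlling the distortion of the compositions $\mathcal{F}_{[i,k]}$ along long orbit segments, and it is here that equi-continuity must be used in an essential and delicate way; a cleaner packaging, if the preceding sections permit, is to prove $h(\mathcal{F})=h(\mathcal{F},\Omega(\mathcal{F}))$ by exactly this scheme and then conclude through the inclusions $\Omega(\mathcal{F})\subseteq CR(\mathcal{F})\subseteq X$, since the two outer entropies already coincide.
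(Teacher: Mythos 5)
Your outer skeleton is exactly the paper's argument: the paper proves this theorem as a three-line sandwich, citing Theorems 3.3 and 3.4 of \cite{[TD]} for closedness of $CR(\mathcal{F})$ and for $\Omega(\mathcal{F})\subseteq CR(\mathcal{F})$ under equi-continuity, and citing \cite{[KMS]} for the equality $h(\mathcal{F})=h(\mathcal{F},\Omega(\mathcal{F}))$, whence $h(\mathcal{F})=h(\mathcal{F},\Omega(\mathcal{F}))\leq h(\mathcal{F},CR(\mathcal{F}))\leq h(\mathcal{F})$. Your closing remark --- ``prove $h(\mathcal{F})=h(\mathcal{F},\Omega(\mathcal{F}))$ and conclude through the inclusions $\Omega(\mathcal{F})\subseteq CR(\mathcal{F})\subseteq X$'' --- is literally that proof, except that the paper imports the $\Omega$-concentration of entropy wholesale from Kolyada--Snoha rather than reproving it. Your direct verification of $\Omega(\mathcal{F})\subseteq CR(\mathcal{F})$ and your bounded-excursion lemma are sound (the wandering-neighborhood count adapts correctly: visits after the threshold time $n_0$ of Definition \ref{deb} occur at most once, early visits are bounded by $n_0$, and compactness of $X\setminus V$ uniformizes), and the easy inequality is fine since $h(\mathcal{F},Y)$ in \cite{[KMS]} is a Bowen-type entropy of an arbitrary subset, needing no invariance --- your insistence that $CR(\mathcal{F})$ be invariant is unnecessary.

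The genuine gap is the step you yourself flag as ``the true obstacle,'' and it does not close as sketched. Knowing that $\mathcal{F}_{j}(x)$ is $\epsilon$-close to $CR(\mathcal{F})$ at every good time $j$ does not produce a point of $CR(\mathcal{F})$ whose \emph{actual trajectory} stays $\tfrac{\epsilon}{2}$-close to that of $x$ over the whole window $\{0,\dots,n-1\}$; so the asserted injection of each bad-data class into an $(n,\mathcal{F},\tfrac{\epsilon}{2})$-separated subset of $CR(\mathcal{F})$ is exactly the theorem being proved, not a bounded-distortion bookkeeping step. Equi-continuity cannot rescue it: it gives uniform control of each $f_{n}$ (hence of any block $\mathcal{F}_{[i,k]}$ of \emph{fixed} length $k$), but nothing uniform in the window length $n$, and no shadowing hypothesis is available here to convert proximity into tracking. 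Note also that in the autonomous case the standard proofs of $h(f)=h(f|_{\Omega(f)})$ sidestep this difficulty via the variational principle (invariant measures live on $\Omega$), a tool unavailable for \textit{NDS}, where by \cite{[CK]} only a variational inequality holds --- which is presumably why the paper leans on the topological proof in \cite{[KMS]} instead of arguing directly. The repair is simple: replace your final counting comparison by the citation of the Kolyada--Snoha equality $h(\mathcal{F})=h(\mathcal{F},\Omega(\mathcal{F}))$ (or supply a genuine block-coding proof of it, which is substantially more work than your sketch suggests), keeping your correct inclusions and monotonicity to finish.
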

\begin{proof}
By Theorems 3.3 and 3.4 of \cite{[TD]}, if the family of homeomorphisms $\{f_{n},~f_{n}^{-1}\}_{n\geq 0}$ is equi-continuous on $X$, then $CR(\mathcal{F})$ is a closed set and $\Omega(\mathcal{F})\subset CR(\mathcal{F})$.
In \cite{[KMS]} the authors introduce the entropy $h(\mathcal{F},Y)$ with respect to any subset $Y$ of $X$ and also prove that $h(\mathcal{F})=h(\mathcal{F},\Omega(\mathcal{F}))$. So $h(\mathcal{F})=h(\mathcal{F},\Omega(\mathcal{F}))\leq h(\mathcal{F},CR(\mathcal{F}))\leq h(\mathcal{F}),$ which complete the proof.
\end{proof}
\begin{definition} \cite{[TDB]}\label{deg}
Let $F = (X,\mathcal{F})$ be a \textit{NDS}. For $\delta > 0$,  the sequence $\{x_n\}_{n=-\infty}^{\infty}$ in $X$ is said to be a $\delta-$pseudo orbit of $\mathcal{F}$ if
$d(f_{n}(x_{n}), x_{n+1})<\delta$ for $n\geq 0$ and $d(f^{-1}_{-n}(x_{n+1}), x_{n})<\delta$ for $n\leq -1$.
For given $\epsilon > 0$, a $\delta-$pseudo orbit $\{x_n\}_{n=-\infty}^{\infty}$ is said to be
$\epsilon-$traced by $y\in X$ if $d (\mathcal{F}_{n}(y),x_{n}) < \epsilon$ for all $n\in  Z.$
The \textit{NDS} $(X,\mathcal{F})$ is said to have
shadowing property or pseudo orbit tracing property (P.O.T.P.)
if, for every $\epsilon > 0$, there exists  $\delta > 0$ such that every $\delta-$pseudo orbit is $\epsilon-$traced by some points of $X$.
\end{definition}
\begin{theorem}
Let $(X,\mathcal{F})$ has the shadowing property and the family homeomorphisms $\{f_{n},~f_{n}^{-1}\}_{n\geq 0}$ is equi-continuous on $X$, then every recurrent point for $\mathcal{F}$ is a chain recurrent point for $\mathcal{F}$.
\end{theorem}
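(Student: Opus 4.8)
The plan is to prove the statement by a direct construction: I would extract a finite orbit segment from the forward recurrence of $x$ and re-index it so that it becomes a closed $\delta$-chain inside a late time window, using equi-continuity only to absorb the small return error at the two endpoints. Let $x$ be recurrent, so in particular $x\in\omega(x)$ in the sense of Definition \ref{ded}; hence there is a strictly increasing sequence of positive integers along which $\mathcal{F}_{j}(x)$ returns arbitrarily close to $x$. Fix $\delta>0$ and $n\geq 0$. First I would invoke equi-continuity of $\{f_{i}\}$ to choose $\delta_{0}\in(0,\delta]$ so small that $d(u,v)<\delta_{0}$ forces $d(f_{i}(u),f_{i}(v))<\delta$ for every $i$. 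Then, from $x\in\omega(x)$, I would select two return times $a<b$ with $a\geq n$, $b\geq a+2$, and $d(\mathcal{F}_{a}(x),x)<\delta_{0}$, $d(\mathcal{F}_{b}(x),x)<\delta_{0}$.

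Next I set $m:=a+1\geq n$ and $k:=b-a\geq 2$, and define the finite sequence by $x_{0}:=x$, $x_{i}:=\mathcal{F}_{a+i}(x)$ for $1\leq i\leq k-1$, and $x_{k}:=x$. The claim is that $\{x_{i}\}_{i=0}^{k}$ is a $\delta$-chain for $x$ with action starting at $m$, taking the forward alternative $f_{m+i}$ at every step. For the interior steps $1\leq i\leq k-2$ the links are exact, since $f_{m+i}(x_{i})=f_{a+i+1}(\mathcal{F}_{a+i}(x))=\mathcal{F}_{a+i+1}(x)=x_{i+1}$. For the final step, $f_{m+(k-1)}(x_{k-1})=f_{b}(\mathcal{F}_{b-1}(x))=\mathcal{F}_{b}(x)$, which lies within $\delta$ of $x_{k}=x$ by the choice of $b$. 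For the initial step, $f_{m}(x_{0})=f_{a+1}(x)$ while $x_{1}=\mathcal{F}_{a+1}(x)=f_{a+1}(\mathcal{F}_{a}(x))$, so $d(f_{a+1}(x),x_{1})<\delta$ follows from $d(x,\mathcal{F}_{a}(x))<\delta_{0}$ together with the defining property of $\delta_{0}$. Since $\delta>0$ and $n\geq 0$ were arbitrary, Definition \ref{def} gives $x\in CR(\mathcal{F})$.

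The one genuinely non-formal point, and the step I expect to be the main obstacle, is matching the requirement that the action start at a prescribed $m\geq n$: the naive forward orbit of $x$ produces a closed chain only with action starting at time $1$, which is useless for large $n$. The device above resolves this by reading the recurrent orbit segment between two late return times $a$ and $b$ as a chain in the shifted window $[a+1,b]$, where the only inexact links are the two endpoints, both controlled by equi-continuity. I would note that neither the shadowing hypothesis nor the backward recurrence $x\in\alpha(x)$ is actually used in this argument; alternatively, the conclusion follows at once from the inclusions $R(\mathcal{F})\subseteq\Omega(\mathcal{F})$ of Remark \ref{rea} and $\Omega(\mathcal{F})\subseteq CR(\mathcal{F})$ under equi-continuity (Theorems 3.3 and 3.4 of \cite{[TD]}, as already invoked in the proof of Theorem \ref{ta}).
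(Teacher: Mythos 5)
Your proposal is correct, and it takes a genuinely different route from the paper's. The paper's own proof is purely citational: shadowing gives $CR(\mathcal{F})\subseteq\Omega(\mathcal{F})$ and equi-continuity gives $\Omega(\mathcal{F})\subseteq CR(\mathcal{F})$ (Theorems 3.3 and 3.4 of \cite{[TD]}), hence $CR(\mathcal{F})=\Omega(\mathcal{F})$, and Remark \ref{rea} then yields $C(\mathcal{F})\subseteq CR(\mathcal{F})$. Your main argument instead builds the chain by hand, and the bookkeeping checks out against Definition \ref{def}: with $m=a+1$, $k=b-a\geq 2$, the interior links are exact since $f_{m+i}(x_{i})=f_{a+i+1}(\mathcal{F}_{a+i}(x))=\mathcal{F}_{a+i+1}(x)=x_{i+1}$, the final link is within $\delta_{0}\leq\delta$ by the choice of the return time $b$, and the initial link is within $\delta$ by equi-continuity applied to the pair $x,\mathcal{F}_{a}(x)$; the requirement $m\geq n$ is met because $x\in\omega(x)$ supplies arbitrarily late return times, and your insistence on $b\geq a+2$ correctly avoids the degenerate chain $x_{0}=x_{1}=x$ that would arise from consecutive return times. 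What each approach buys: your construction is self-contained, uses only $x\in\omega(x)$ rather than full recurrence (backward recurrence $x\in\alpha(x)$ is indeed never needed), uses equi-continuity only of the forward maps, and — as you correctly observe — exposes that the shadowing hypothesis is redundant for this statement; your one-line alternative via $R(\mathcal{F})\subseteq\Omega(\mathcal{F})\subseteq CR(\mathcal{F})$ is essentially the paper's proof with the shadowing step excised, since shadowing is used there only to obtain the reverse inclusion $CR(\mathcal{F})\subseteq\Omega(\mathcal{F})$, which the theorem's conclusion does not require. The only thing the paper's route delivers beyond the statement is the stronger structural equality $CR(\mathcal{F})=\Omega(\mathcal{F})$ and, via closedness of $CR(\mathcal{F})$, the inclusion of the closure $C(\mathcal{F})$ rather than just $R(\mathcal{F})$; your direct argument gives exactly the asserted inclusion $R(\mathcal{F})\subseteq CR(\mathcal{F})$, which is all that is claimed.
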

\begin{proof}
In \cite{[TD]}, the authors prove that if $(X,\mathcal{F})$ has the shadowing property then $CR(\mathcal{F})\subset\Omega(\mathcal{F}) $, and consequently, if $(X,\mathcal{F})$ has the shadowing property and the family homeomorphisms $\{f_{n},~f_{n}^{-1}\}_{n\geq 0}$ is equi-continuous on $X$, then $CR(\mathcal{F})=\Omega(\mathcal{F})$ and hence Remark \ref{rea} implies that $C(\mathcal{F})\subset CR(\mathcal{F})$.
\end{proof}
\section{Pseudo orbits and entropy}
In \cite{[BS]} Barge and Swanson use of pseudo-orbits and periodic-pseudo-orbits instead of orbits in their definition of topological entropy which is one of the equivalent definitions of topological entropy. In this section, as the main part of the paper,  \emph{pseudo-entropy}   and \emph{periodic-pseudo-entropy} for non-autonomous dynamical systems introduced and the equivalence of that  with topological entropy on non-autonomous dynamical systems investigated.\\
A subset $E$ of $(\alpha,\mathcal{F})$-pseudo orbits is $(n,\epsilon)-$separated if, for each distinct sequences $\texttt{x}=\{x_n\}_{n=-\infty}^{\infty}$, $\texttt{y}=\{y_n\}_{n=-\infty}^{\infty}$ in $E$, there is a  $0\leq k< n$, for which $d(x_{k},y_{k})>\epsilon$. Let $c_{n}(\epsilon,\alpha,\mathcal{F})$ denote the maximal cardinality of a $(n,\epsilon)-$separated set of $(\alpha,F)$-pseudo orbits. Since $X$ is compact then $c_{n}(\epsilon,\alpha,\mathcal{F})$ is finite.
\begin{definition}\label{deh}
The number $h_{p}(\mathcal{F})=\displaystyle \lim_{\epsilon\rightarrow 0}\lim_{\alpha\rightarrow 0}\displaystyle\limsup_{n\rightarrow \infty}\frac{1}{n}\log c_{n}(\epsilon,\alpha,\mathcal{F})$ will be called the pseudo-entropy of $\mathcal{F}$.
\end{definition}
Now, we are going to show that $h_{p}(\mathcal{F})=h(\mathcal{F})$. For this purpose we need some notions and lemmas. Our proof of Lemmas and Theorems in this section  is based upon ideas found in\cite{[BS]}.\\
Suppose that $X_{\alpha}$ is the set of all $(\alpha,\mathcal{F})$-pseudo orbits and define the metric $\rho$ on $X_{\alpha}$ by $\rho(\texttt{x},\texttt{y})=\displaystyle\Sigma_{i=-\infty}^{\infty}\frac{d(x_{i},y_{i})}{2^{|i|}}$. By definition, $X_{\alpha}$ is a closed subset of $X^{Z}$ and hence is compact. Let $\sigma_{\alpha}:X_{\alpha}\rightarrow X_{\alpha}$ denote the shift
$$\sigma_{\alpha}(\cdots,x_{-1};x_{0},x_{1},\cdots)=(\cdots,x_{-1},x_{0};x_{1},x_{2},\cdots).$$
For each $\epsilon>0$, let $\emph{A}_{1,\epsilon}$ be a finite cover of $X_{1}$ by $\epsilon-$balls and form the restricted covers $\emph{A}_{\alpha,\epsilon}=\{A\cap X_{\alpha}/ A\in \emph{A}_{1,\epsilon}\}$, $0\leq \alpha\leq 1$ \cite{[BS]}.
\begin{lemma}\label{lec}
$h(\sigma_{0})=h(\mathcal{F})$.
\end{lemma}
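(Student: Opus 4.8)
The plan is to read off $h(\sigma_0)$ from $(n,\epsilon)$-separated sets for the shift in the Bowen metric built from $\rho$, and to compare these with the separated sets defining $h(\mathcal{F})$ through the zeroth-coordinate projection. For $\alpha=0$ the conditions in Definition \ref{deg} force $x_{n+1}=f_n(x_n)$, so a point $\texttt{x}=\{x_n\}\in X_0$ is a genuine bi-infinite $\mathcal{F}$-orbit and $x_j=\mathcal{F}_j(x_0)$ for $j\ge 0$. Since each $f_n$ is a homeomorphism, the projection $p:X_0\to X$, $p(\texttt{x})=x_0$, is a continuous bijection whose inverse is continuous (the coordinates depend continuously on $x_0$, and the series defining $\rho$ converges uniformly by compactness); thus $p$ identifies $X_0$ with $X$, and under this identification $d_n(x_0,y_0)=\max_{0\le j<n}d(x_j,y_j)$. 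The whole argument follows the scheme of \cite{[BS]}.

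First I would prove $h(\sigma_0)\ge h(\mathcal{F})$. The single term $i=0$ already gives $\rho(\sigma_0^{j}\texttt{x},\sigma_0^{j}\texttt{y})=\sum_{i}2^{-|i|}d(x_{i+j},y_{i+j})\ge d(x_j,y_j)$, whence the Bowen metric of $\sigma_0$ satisfies $\rho_n(\texttt{x},\texttt{y})\ge d_n(x_0,y_0)$. Therefore a collection of orbits whose base points form an $(n,\mathcal{F},\epsilon)$-separated subset of $X$ is itself $(n,\epsilon)$-separated for $\sigma_0$, so $s_n(\sigma_0,\epsilon)\ge s_n(\mathcal{F},\epsilon)$; letting $n\to\infty$ and then $\epsilon\to 0$ gives the inequality.

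The reverse inequality $h(\sigma_0)\le h(\mathcal{F})$ is the crux, and here the weights $2^{-|i|}$ do the work by confining $\rho$ to a bounded window of coordinates. Given $\epsilon>0$, fix $\eta<\epsilon$ and $N$ with $2^{-N}\,\mathrm{diam}(X)<\eta$, so that $\rho(\sigma_0^{j}\texttt{x},\sigma_0^{j}\texttt{y})\le \eta+\sum_{|i|\le N}2^{-|i|}d(x_{i+j},y_{i+j})$. As $j$ runs over $0\le j<n$, the nonnegative coordinates appearing here lie in $[0,n-1+N]$ and are controlled by $d_{n+N}(x_0,y_0)$, while only the finitely many negative coordinates $x_{-1},\dots,x_{-N}$ can occur; each of these is a fixed composition of (uniformly continuous) inverse maps applied to $x_0$, so $d(x_{-k},y_{-k})\le\omega_k\big(d(x_0,y_0)\big)$ for a modulus $\omega_k$. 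Combining these bounds yields $\rho_n(\texttt{x},\texttt{y})\le\eta+C\,\Omega\big(d_{n+N}(x_0,y_0)\big)$ with $\Omega=\max(\mathrm{id},\omega_1,\dots,\omega_N)$ and $C\le 3$. Consequently an $(n,\epsilon)$-separated set for $\sigma_0$ projects to an $(n+N,\mathcal{F},\epsilon')$-separated set, with $\epsilon'=\epsilon'(\epsilon,N)>0$; hence $s_n(\sigma_0,\epsilon)\le s_{n+N}(\mathcal{F},\epsilon')$. Dividing by $n$, letting $n\to\infty$ (so the additive $N$ is absorbed) and then $\epsilon\to 0$ (whence $\eta\to0$, $N\to\infty$ and $\epsilon'\to0$) gives $h(\sigma_0)\le h(\mathcal{F})$, and with the previous step $h(\sigma_0)=h(\mathcal{F})$.

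I expect the reverse inequality to be the only real obstacle. Two points need care: discarding the tails of the series $\rho$ uniformly in the shift parameter $j$ (handled by the choice of $N$), and converting closeness of the backward coordinates $x_{-1},\dots,x_{-N}$ — governed by the inverse maps — into a condition on the forward orbit segment, so that the whole window collapses into a single $d_{n+N}$-ball. The inverse maps are uniformly continuous on the compact space $X$, so each modulus $\omega_k$ is well defined; the only thing to monitor is that $\epsilon'(\epsilon,N)$ still tends to $0$ as $\epsilon\to0$, which it does because the slack $\eta$ and the residual $(\epsilon-\eta)$ both shrink to $0$.
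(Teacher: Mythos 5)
Your proof is correct, and it takes a more self-contained route than the paper. The paper delegates both inequalities to citations: for $h(\mathcal{F})\le h(\sigma_0)$ it uses the coordinate projections $\pi_k(\texttt{x})=x_k$, which satisfy $f_k\circ\pi_k=\pi_{k+1}\circ\sigma_0$, together with the semi-conjugacy entropy inequality (Proposition \ref{pa}, from \cite{[CK]}); for $h(\sigma_0)\le h(\mathcal{F})$ it invokes the open-cover comparison of Lemma 1 of \cite{[BS]} with Proposition 3.2 of \cite{[CK]}, producing for each finite cover $\mathcal{A}$ of $X_0$ a cover $\mathcal{B}$ of $X$ with $h(\sigma_0,\mathcal{A})\le h(\mathcal{F},\mathcal{B})$. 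You instead argue directly with Bowen separated sets: your bound $\rho_n\ge d_n$ is the metric shadow of the paper's semi-conjugacy step, and your finite-window truncation of $\rho$ — forward coordinates controlled by $d_{n+N}(x_0,y_0)$, the finitely many backward coordinates $x_{-k}=f_k^{-1}\circ\cdots\circ f_1^{-1}(x_0)$ controlled by moduli of uniform continuity — is exactly the content of the Barge--Swanson cover lemma, made quantitative. What this buys is independence from the cited propositions, whose statements in the paper are loosely phrased (read literally, Definition \ref{dem} and Proposition \ref{pa} applied to these $\pi_k$ would give the inequality in the opposite direction; the intended reading is the factor inequality for the surjective $\pi_k$), so your version actually closes a weak joint in the exposition. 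Three small points: the tail of $\rho$ is bounded by $2^{1-N}\operatorname{diam}(X)$, not $2^{-N}\operatorname{diam}(X)$, so adjust the choice of $N$; the orbit recursion should be $x_j=f_j(x_{j-1})$ so that $x_j=\mathcal{F}_j(x_0)$ under the convention $\mathcal{F}_j=f_j\circ\cdots\circ f_1$ (the paper's Definition \ref{deg} has the same off-by-one); and you do not need $\epsilon'\to 0$ as $\epsilon\to 0$ — for each fixed $\epsilon$, any single $\epsilon'>0$ with $\eta+3\,\Omega(\epsilon')\le\epsilon$ already gives $\limsup_{n\to\infty}\frac{1}{n}\log s_n(\sigma_0,\epsilon)\le h(\mathcal{F})$, since the growth rate at any fixed scale is at most $h(\mathcal{F})$. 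Like the paper's own proof (which sets $f_k=f_{-k}^{-1}$ for $k<0$), your identification of $X_0$ with $X$ requires the $f_n$ to be homeomorphisms, so you are working under the same standing hypothesis.
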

\begin{proof}
Consider the \textit{NDS} $G=(X_{0},\{g_{i}\})$, where $g_{i}=\sigma_{0}$, for all $i\geq 0$. Let $\pi_{k}:X_{0}\rightarrow X$ be given by $\pi_{k}(\texttt{x})=x_{k}$, $k\in Z$. So $f_{k}\circ \pi_{k}=\pi_{k+1}\circ\sigma_{0}$, where for $k<0$ we define $f_{k}(x)=f^{-1}_{-k}(x)$. Then $\mathcal{F}$ is semi conjugate to $G$ and hence by Proposition \ref{pa}, $h(\mathcal{F})\leq h(\mathcal{G})=h(\sigma_{0})$. On the other hand by Lemma 1 of \cite{[BS]} and Proposition 3.2 of \cite{[CK]}, for any finite cover $\mathcal{A}$ of $X_{0}$ there is a finite cover $\mathcal{B}$ of $X$ such that $h(\sigma_{0},\mathcal{A})\leq h(\mathcal{F},\mathcal{B})$. So that $h(\sigma_{0})\leq h(\mathcal{F})$.
\end{proof}
\begin{lemma}\label{led}\cite{[BS]}
$h(\sigma_{0},\mathcal{A}_{0,\epsilon})\geq \displaystyle\inf_{0<\alpha\leq 1}h(\sigma_{\alpha},\mathcal{A}_{\alpha,\epsilon})$
\end{lemma}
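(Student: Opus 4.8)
The plan is to prove the stronger statement $h(\sigma_0,\mathcal{A}_{0,\epsilon})=\inf_{0<\alpha\le1}h(\sigma_\alpha,\mathcal{A}_{\alpha,\epsilon})$, of which the asserted inequality is the nontrivial half. Write $\mathcal{N}_\alpha(n)=\mathcal{N}\bigl(\bigvee_{j=0}^{n-1}\sigma_\alpha^{-j}(\mathcal{A}_{\alpha,\epsilon})\bigr)$. First I would record three elementary facts. (i) The pseudo-orbit spaces are nested, $X_0\subseteq X_\alpha\subseteq X_\beta$ for $0\le\alpha\le\beta\le1$, each is compact, and $\bigcap_{\alpha>0}X_\alpha=X_0$, since a sequence that is an $\alpha$-pseudo orbit for every $\alpha>0$ satisfies $d(f_n(x_n),x_{n+1})=0$ and is thus a genuine orbit. (ii) As $\sigma_\alpha$ is the restriction of the single shift to the invariant set $X_\alpha$ and $\mathcal{A}_{\alpha,\epsilon}$ is the restriction of $\mathcal{A}_{1,\epsilon}$, the refinement $\bigvee_{j=0}^{n-1}\sigma_\alpha^{-j}(\mathcal{A}_{\alpha,\epsilon})$ consists exactly of the sets $C_{\mathbf{i}}\cap X_\alpha$, where $C_{\mathbf{i}}=\bigcap_{j=0}^{n-1}\sigma_1^{-j}(A_{i_j})$ ranges over the finitely many elements of the corresponding refinement on $X_1$. (iii) Restricting a subcover of the larger space to the smaller one gives $\mathcal{N}_\beta(n)\le\mathcal{N}_\alpha(n)$ whenever $\beta\le\alpha$; in particular $\mathcal{N}_0(n)\le\mathcal{N}_\alpha(n)$ for every $\alpha$, which already yields the easy inequality $h(\sigma_0,\mathcal{A}_{0,\epsilon})\le\inf_\alpha h(\sigma_\alpha,\mathcal{A}_{\alpha,\epsilon})$.

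The crux is the reverse estimate, which I would establish at the level of fixed $n$ by a compactness argument. Fix $n$ and choose a minimal subcover of $X_0$, indexed by a set $S$ of multi-indices with $|S|=\mathcal{N}_0(n)$, so that the open set $U_S=\bigcup_{\mathbf{i}\in S}C_{\mathbf{i}}$ of $X_1$ contains $X_0$. The sets $K_\alpha=X_\alpha\setminus U_S$ are compact, decrease as $\alpha\downarrow0$, and satisfy $\bigcap_{\alpha>0}K_\alpha=X_0\setminus U_S=\emptyset$ by fact (i). Since a nested family of nonempty compacta has nonempty intersection, some $K_{\alpha_0}=\emptyset$; that is, $\{C_{\mathbf{i}}\cap X_{\alpha_0}:\mathbf{i}\in S\}$ already covers $X_{\alpha_0}$, so $\mathcal{N}_{\alpha_0}(n)\le|S|=\mathcal{N}_0(n)$. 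Combined with monotonicity (iii) this forces $\mathcal{N}_\alpha(n)=\mathcal{N}_0(n)$ for all sufficiently small $\alpha>0$, whence $\inf_{0<\alpha\le1}\mathcal{N}_\alpha(n)=\mathcal{N}_0(n)$ for each $n$.

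To pass from fixed $n$ to entropy I would use that each $\sigma_\alpha$ is a single continuous self-map, so $n\mapsto\log\mathcal{N}_\alpha(n)$ is subadditive and
\[
h(\sigma_\alpha,\mathcal{A}_{\alpha,\epsilon})=\lim_{n\to\infty}\tfrac1n\log\mathcal{N}_\alpha(n)=\inf_{n\ge1}\tfrac1n\log\mathcal{N}_\alpha(n).
\]
Replacing the $\limsup$ by an infimum is exactly what lets the two parameters interchange:
\[
\inf_{0<\alpha\le1}h(\sigma_\alpha,\mathcal{A}_{\alpha,\epsilon})=\inf_{0<\alpha\le1}\inf_{n\ge1}\tfrac1n\log\mathcal{N}_\alpha(n)=\inf_{n\ge1}\tfrac1n\log\Bigl(\inf_{0<\alpha\le1}\mathcal{N}_\alpha(n)\Bigr)=\inf_{n\ge1}\tfrac1n\log\mathcal{N}_0(n)=h(\sigma_0,\mathcal{A}_{0,\epsilon}),
\]
which gives the desired inequality, and in fact an equality.

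I expect the main obstacle to be precisely this interchange of the limit in $n$ with the infimum in $\alpha$: a naive comparison would relate $\lim_\alpha\limsup_n$ to $\limsup_n\lim_\alpha$ in the unfavourable direction. The two devices that overcome it are the subadditivity of $\log\mathcal{N}_\alpha(n)$, valid because $\sigma_\alpha$ is autonomous, which turns each entropy into an infimum over $n$, and the finite-intersection argument that stabilises $\mathcal{N}_\alpha(n)$ at $\mathcal{N}_0(n)$ for each fixed $n$ and all small $\alpha$. A minor technical point to verify is that $\mathcal{A}_{\alpha,\epsilon}$ is genuinely the restriction of the $X_1$-refinement and that $X_\alpha$ may be taken compact with $\bigcap_{\alpha>0}X_\alpha=X_0$, as recorded in the discussion preceding the lemma.
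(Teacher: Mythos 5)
Your proof is correct and is essentially the argument of the paper's cited source: the paper states this lemma without proof, deferring to Barge--Swanson \cite{[BS]}, whose Lemma~2 proof uses exactly your two ingredients --- the finite-intersection/compactness argument on the nested compacta $X_\alpha$ with $\bigcap_{\alpha>0}X_\alpha=X_0$ to get $\mathcal{N}_\alpha(n)\le\mathcal{N}_0(n)$ for each fixed $n$ and small $\alpha$, and subadditivity of $n\mapsto\log\mathcal{N}_\alpha(n)$ (valid since each $\sigma_\alpha$ is a single continuous self-map) to write $h(\sigma_\alpha,\mathcal{A}_{\alpha,\epsilon})=\inf_{n}\frac1n\log\mathcal{N}_\alpha(n)$ and pass to the limit. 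Your packaging as the stronger equality $h(\sigma_0,\mathcal{A}_{0,\epsilon})=\inf_{0<\alpha\le1}h(\sigma_\alpha,\mathcal{A}_{\alpha,\epsilon})$, via the interchange of the two infima, is a cosmetic strengthening of the same argument, not a different route.
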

\begin{lemma}\label{lef}
$\displaystyle\limsup_{n\rightarrow \infty}\frac{1}{n}\log~ c_{n}(2\epsilon,\alpha,\mathcal{F})\leq h(\sigma_{\alpha},\mathcal{A}_{\alpha,\epsilon}).$
\end{lemma}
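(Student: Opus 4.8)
The plan is to compare the number of separated pseudo orbits with the covering numbers that compute $h(\sigma_{\alpha},\mathcal{A}_{\alpha,\epsilon})$. Recall that, since $\sigma_{\alpha}$ is an autonomous map, $h(\sigma_{\alpha},\mathcal{A}_{\alpha,\epsilon})=\limsup_{n\rightarrow\infty}\frac{1}{n}\log\mathcal{N}(\mathcal{C}_{n})$, where $\mathcal{C}_{n}=\bigvee_{j=0}^{n-1}\sigma_{\alpha}^{-j}(\mathcal{A}_{\alpha,\epsilon})$. So it suffices to prove, for each fixed $n$, the single inequality $c_{n}(2\epsilon,\alpha,\mathcal{F})\leq\mathcal{N}(\mathcal{C}_{n})$; taking $\frac{1}{n}\log(\cdot)$ and then $\limsup_{n\rightarrow\infty}$ gives the stated bound. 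To this end I would fix an $(n,2\epsilon)$-separated set $E$ of $(\alpha,\mathcal{F})$-pseudo orbits realizing $c_{n}(2\epsilon,\alpha,\mathcal{F})$, and show that no single member of $\mathcal{C}_{n}$ contains two distinct points of $E$. Granting this, any subcover of $X_{\alpha}$ drawn from $\mathcal{C}_{n}$ must use at least $|E|$ members (one member covers at most one point of $E$, and every point of $E\subseteq X_{\alpha}$ must be covered), whence $c_{n}(2\epsilon,\alpha,\mathcal{F})=|E|\leq\mathcal{N}(\mathcal{C}_{n})$.

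The key preliminary observation is that the $0$-th coordinate dominates the metric $\rho$: from $\rho(\texttt{u},\texttt{v})=\sum_{i=-\infty}^{\infty}2^{-|i|}d(u_{i},v_{i})$ and nonnegativity of the terms, one gets $d(u_{0},v_{0})\leq\rho(\texttt{u},\texttt{v})$ for all $\texttt{u},\texttt{v}\in X_{\alpha}$. Since the $0$-th coordinate of $\sigma_{\alpha}^{j}\texttt{x}$ is exactly $x_{j}$, applying this to $\texttt{u}=\sigma_{\alpha}^{j}\texttt{x}$ and $\texttt{v}=\sigma_{\alpha}^{j}\texttt{y}$ yields $d(x_{j},y_{j})\leq\rho(\sigma_{\alpha}^{j}\texttt{x},\sigma_{\alpha}^{j}\texttt{y})$ for every index $j$.

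Now I would run the separation argument by contraposition. Suppose $\texttt{x},\texttt{y}\in E$ both belong to a common element $C\in\mathcal{C}_{n}$. By definition of the join, for each $0\leq j<n$ the iterates $\sigma_{\alpha}^{j}\texttt{x}$ and $\sigma_{\alpha}^{j}\texttt{y}$ lie in a common member $A_{j}\in\mathcal{A}_{\alpha,\epsilon}$, which by construction is contained in an $\epsilon$-ball of $(X_{1},\rho)$; running the triangle inequality through the center of that ball gives $\rho(\sigma_{\alpha}^{j}\texttt{x},\sigma_{\alpha}^{j}\texttt{y})<2\epsilon$. Combined with the coordinate bound above, this forces $d(x_{j},y_{j})<2\epsilon$ for all $0\leq j<n$, which is precisely the negation of $(n,2\epsilon)$-separatedness. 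Hence $\texttt{x}=\texttt{y}$, establishing that distinct points of $E$ occupy distinct cells of $\mathcal{C}_{n}$ and completing the chain of inequalities.

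The only point demanding care is the factor of two, and it explains the asymmetry in the statement (separation scale $2\epsilon$ on the left against the $\epsilon$-balls defining $\mathcal{A}_{\alpha,\epsilon}$ on the right): an $\epsilon$-ball has $\rho$-diameter at most $2\epsilon$, so containment of two iterates in a common cell of $\mathcal{A}_{\alpha,\epsilon}$ only controls their $\rho$-distance up to $2\epsilon$, which in turn controls each coordinate only up to $2\epsilon$. Apart from tracking this constant, the proof is a faithful transcription into the pseudo-orbit shift space of the classical comparison between separated sets and open-cover counts found in \cite{[BS]}, so I anticipate no further obstacle; compactness of $X_{\alpha}$, already noted, guarantees every quantity involved is finite.
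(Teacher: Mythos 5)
Your proposal is correct and takes essentially the same route as the paper: the paper's proof simply observes that $(n,2\epsilon)$-separated $(\alpha,\mathcal{F})$-pseudo orbits become separated orbits of the shift $\sigma_{\alpha}$ and then invokes the counting argument from the proof of Lemma 3 of \cite{[BS]} against the join $\bigvee_{j=0}^{n-1}\sigma_{\alpha}^{-j}(\mathcal{A}_{\alpha,\epsilon})$, which is exactly the comparison you carry out. The only difference is that you supply the details the paper outsources to \cite{[BS]} --- the coordinate estimate $d(x_{j},y_{j})\leq\rho(\sigma_{\alpha}^{j}\texttt{x},\sigma_{\alpha}^{j}\texttt{y})$ and the $2\epsilon$ bound on the $\rho$-diameter of the $\epsilon$-balls defining $\mathcal{A}_{\alpha,\epsilon}$, which together show each cell of the join meets a separated set in at most one point --- so there is no substantive divergence.
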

\begin{proof}
This is clear that for any pair $(n,\epsilon)-$separated, $\texttt{x}=\{x_n\}_{n=-\infty}^{\infty}$ and
$\texttt{y}=\{y_n\}_{n=-\infty}^{\infty}$ of $(\alpha,\mathcal{F})$-pseudo orbits in  $E$, $\texttt{x}$ and $\texttt{y}$ are $(n,\epsilon)-$separated orbits of
$\sigma_{\alpha}:X_{\alpha}\rightarrow X_{\alpha}$. Then, by proof of Lemma 3 of \cite{[BS]}, the maximal cardinality of a
$(n,\epsilon)-$separated set for $\mathcal{F}$ is not greater than $card(\bigvee_{i=0}^{n-1}\sigma_{\alpha}^{-1}(\mathcal{A}_{\alpha,\epsilon}))$.
 So $\displaystyle\limsup_{n\rightarrow \infty}\frac{1}{n}\log c_{n}(\epsilon,\alpha,\mathcal{F})\leq h(\sigma_{\alpha},\mathcal{A}_{\alpha,\epsilon}).$
\end{proof}
The following theorem is one the main result of this paper.
\begin{theorem}\label{tb}
Let $X$ be a compact metric space and $(X,\mathcal{F})$ be a non-autonomous dynamical systems on $X$. Then $h_{p}(\mathcal{F})=h(\mathcal{F})$.
\end{theorem}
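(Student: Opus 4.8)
The plan is to prove the two inequalities $h(\mathcal{F})\le h_{p}(\mathcal{F})$ and $h_{p}(\mathcal{F})\le h(\mathcal{F})$ and combine them. The first is elementary, since every genuine trajectory is a pseudo orbit. Indeed, fixing $y\in X$ and setting $x_{n}=\mathcal{F}_{n}(y)$ for all $n\in\mathbb{Z}$ (with the convention $f_{k}=f_{-k}^{-1}$ for $k<0$ used throughout) gives $d(f_{n}(x_{n}),x_{n+1})=0$, so $\{x_{n}\}$ is an $(\alpha,\mathcal{F})$-pseudo orbit for every $\alpha\ge 0$. Moreover the separation condition for pseudo orbits, namely $d(x_{k},y_{k})>\epsilon$ for some $0\le k<n$, reduces for two such trajectories exactly to $d_{n}$-separation of their base points. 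Hence any $(n,\mathcal{F},\epsilon)$-separated subset of $X$ yields an $(n,\epsilon)$-separated family of pseudo orbits of the same cardinality, so $s_{n}(\mathcal{F},\epsilon)\le c_{n}(\epsilon,\alpha,\mathcal{F})$ for every $\alpha>0$ and $n\ge 1$. Applying $\limsup_{n}\frac1n\log(\cdot)$ and then letting $\alpha\to 0$ and $\epsilon\to 0$ (orbits remain pseudo orbits in the limit, so the bound persists) gives $h(\mathcal{F})\le h_{p}(\mathcal{F})$.

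For the reverse inequality I would chain the three lemmas of this section. By Lemma \ref{lef}, for each $\epsilon>0$ and each $\alpha\in(0,1]$,
\[
\limsup_{n\rightarrow\infty}\frac1n\log c_{n}(2\epsilon,\alpha,\mathcal{F})\le h(\sigma_{\alpha},\mathcal{A}_{\alpha,\epsilon}).
\]
Taking the infimum over $\alpha\in(0,1]$ and then invoking Lemma \ref{led} followed by Lemma \ref{lec} gives
\[
\inf_{0<\alpha\le 1}\limsup_{n\rightarrow\infty}\frac1n\log c_{n}(2\epsilon,\alpha,\mathcal{F})\le\inf_{0<\alpha\le 1}h(\sigma_{\alpha},\mathcal{A}_{\alpha,\epsilon})\le h(\sigma_{0},\mathcal{A}_{0,\epsilon})\le h(\sigma_{0})=h(\mathcal{F}).
\]

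To finish I would pass from the infimum to the limit in $\alpha$. Enlarging $\alpha$ enlarges the set $X_{\alpha}$ of admissible pseudo orbits, so $c_{n}(2\epsilon,\alpha,\mathcal{F})$ is non-decreasing in $\alpha$; consequently $\inf_{0<\alpha\le1}\limsup_{n}\frac1n\log c_{n}(2\epsilon,\alpha,\mathcal{F})=\lim_{\alpha\to 0}\limsup_{n}\frac1n\log c_{n}(2\epsilon,\alpha,\mathcal{F})$. Combined with the previous display this reads $\lim_{\alpha\to 0}\limsup_{n}\frac1n\log c_{n}(2\epsilon,\alpha,\mathcal{F})\le h(\mathcal{F})$ for every $\epsilon>0$; letting $\epsilon\to 0$ (so $2\epsilon\to 0$ as well) the left-hand side is exactly $h_{p}(\mathcal{F})$ by Definition \ref{deh}, whence $h_{p}(\mathcal{F})\le h(\mathcal{F})$. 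Together with the first step this proves $h_{p}(\mathcal{F})=h(\mathcal{F})$.

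The conceptual heart of the argument has already been isolated in the lemmas, in particular the control of the cover entropy $h(\sigma_{\alpha},\mathcal{A}_{\alpha,\epsilon})$ in the parameter $\alpha$ encoded in Lemma \ref{led}, which is what lets the pseudo-orbit count be bounded by the genuine system $\sigma_{0}$. The only delicate points remaining in the assembly are therefore bookkeeping: justifying the interchange of the infimum over $\alpha$ with the limit $\alpha\to 0$ via monotonicity of $c_{n}$, and absorbing the harmless factor of $2$ coming from Lemma \ref{lef} as $\epsilon\to 0$. I expect the monotonicity-to-limit step to be where one must be most careful, since the nested operations $\limsup_{n}$, $\lim_{\alpha}$ and $\lim_{\epsilon}$ cannot be permuted freely.
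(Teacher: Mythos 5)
Your proposal is correct and follows essentially the same route as the paper's own proof: the easy inequality $h(\mathcal{F})\le h_{p}(\mathcal{F})$ from orbits being pseudo orbits, and the reverse via chaining Lemma \ref{lef}, Lemma \ref{led}, and Lemma \ref{lec}, letting $\alpha\to 0$ and then $\epsilon\to 0$. Your explicit justification of replacing the infimum over $\alpha$ by the limit $\alpha\to 0$ using monotonicity of $c_{n}(\epsilon,\alpha,\mathcal{F})$ in $\alpha$ is a welcome detail that the paper leaves implicit.
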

\begin{proof}
Since every orbit of $\mathcal{F}$ is an $(\alpha,\mathcal{F})$-pseudo orbit, for all $\alpha>0$, this is clear that $h(\mathcal{F})\leq h_{p}(\mathcal{F})$. On the other hand, Lemma \ref{lef} implies that
$\displaystyle \lim_{\alpha\rightarrow 0}\displaystyle\limsup_{n\rightarrow \infty}\frac{1}{n}\log c_{n}(2\epsilon,\alpha,\mathcal{F})\leq \displaystyle\inf_{0<\alpha \leq 1} h(\sigma_{\alpha},\mathcal{A}_{\alpha,\epsilon}).$ Then by Lemma \ref{led}\\ $\displaystyle \lim_{\alpha\rightarrow 0}\displaystyle\limsup_{n\rightarrow \infty}\frac{1}{n}\log c_{n}(2\epsilon,\alpha,\mathcal{F})\leq  h(\sigma_{0},\mathcal{A}_{0,\epsilon}).$ So, Letting $\epsilon\rightarrow 0$ and use of Lemma \ref{lec}, we have  $h_{p}(\mathcal{F})\leq h(\mathcal{F})$.
\end{proof}
In \cite{[KMS]} the authors cosider the relation between the topological entropy of a \textit{NDS}, as  a uniformly convergent sequence of
maps and the classical topological entropy of its limit and gave the following lemma.
\begin{lemma}\label{leb}\cite{[KMS]}
Let $\mathcal{F}$ be a sequence of continuous self-maps of a
compact metric space $X$ converging uniformly to $f$. Then $h(\mathcal{F})\leq h(f)$.
\end{lemma}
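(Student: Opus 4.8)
The plan is to bound the non-autonomous separated and covering numbers of $\mathcal{F}$ by those of the autonomous limit $f$, using the pseudo orbit machinery just developed in Section 4. The naive idea of comparing $\mathcal{F}_j$ with $f^{j}$ pointwise fails: even though $a_i := \sup_{z\in X} d(f_i(z), f(z)) \to 0$, the modulus of continuity of $f$ can amplify these errors under composition, so $\sup_x d(\mathcal{F}_j(x), f^{j}(x))$ need not be small. The observation that repairs this is that \emph{every} $\mathcal{F}$-orbit is an $f$-pseudo orbit: since $\mathcal{F}_{j+1} = f_{j+1}\circ \mathcal{F}_j$, we have $d\big(f(\mathcal{F}_j(x)),\mathcal{F}_{j+1}(x)\big) = d\big(f(\mathcal{F}_j x), f_{j+1}(\mathcal{F}_j x)\big) \le a_{j+1}$. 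Thus the complexity of $\mathcal{F}$ is controlled by the complexity of pseudo orbits of $f$, which by Theorem \ref{tb} (applied to the constant sequence $f_i \equiv f$) has exponential growth rate exactly $h_p(f) = h(f)$.

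To make this precise, fix $\epsilon > 0$ and $\alpha > 0$. By uniform convergence choose $N$ with $a_i < \alpha$ for all $i \ge N$, and split the time window $\{0,1,\dots,n-1\}$ into the prefix $P = \{0,\dots,N-1\}$ and the tail $T = \{N,\dots,n-1\}$. On the prefix the pseudometric $d_P(x,y) = \max_{j\in P} d(\mathcal{F}_j x,\mathcal{F}_j y)$ has covering number at most $\operatorname{Cov}(X,\epsilon)^{N} =: C(N,\epsilon)$, a constant independent of $n$. On the tail, the estimate above shows that for each $x$ the segment $(\mathcal{F}_N x,\dots,\mathcal{F}_{n-1}x)$ is an $\alpha$-pseudo orbit of $f$, so the covering number for $d_T$ is bounded by the number of $\epsilon$-distinguishable $\alpha$-pseudo orbit segments of $f$, i.e. up to the usual factor of $2$ between covering and separated numbers by $c_{\,n-N}(\epsilon/2,\alpha,f)$. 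Since the $d_n$-covering number is submultiplicative over the partition $\{0,\dots,n-1\} = P \sqcup T$ and dominates the separated number, we obtain $s_n(\mathcal{F},\epsilon) \le C(N,\epsilon)\cdot c_{\,n-N}(\epsilon/2,\alpha,f)$.

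Finally I take limits in the right order. Applying $\tfrac1n\log(\cdot)$ and $\limsup_{n\to\infty}$, the constant prefix factor $C(N,\epsilon)$ contributes $0$ and the index shift $n \mapsto n-N$ is harmless, giving $\limsup_n \tfrac1n\log s_n(\mathcal{F},\epsilon) \le \limsup_m \tfrac1m \log c_m(\epsilon/2,\alpha,f)$ for every $\alpha$. Letting $\alpha\to0$ and then $\epsilon\to0$, the right-hand side converges to $h_p(f)$, which equals $h(f)$ by Theorem \ref{tb}, while the left-hand side converges to $h(\mathcal{F})$. Hence $h(\mathcal{F}) \le h(f)$.

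The main obstacle is exactly the error accumulation under composition, which is why one cannot argue by directly comparing orbits of $\mathcal{F}$ and $f$; the pseudo orbit reformulation together with the uniform bound $a_i < \alpha$ on the tail is what circumvents it, while the finitely many ``wild'' prefix maps are rendered entropy-invisible by the $\tfrac1n\log$ normalization. The remaining points are routine bookkeeping: submultiplicativity of covering numbers over a time partition, the comparison constants between covering and separated numbers, and a one-sided adaptation of the bi-infinite pseudo orbit count $c_m$ of Section 4 so that only the forward coordinates $0\le k < m$ enter the separation.
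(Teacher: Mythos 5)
Your proof is correct in substance, but note that it cannot be compared with a proof ``in the paper'': the paper gives none, importing the lemma verbatim from Kolyada--Snoha \cite{[KMS]}, whose original argument works directly with the limit map and does not go through pseudo-orbits. Your route instead reuses the paper's own Section 4 machinery, and it hangs together: the key observation $d\bigl(f(\mathcal{F}_j(x)),\mathcal{F}_{j+1}(x)\bigr)=d\bigl(f(\mathcal{F}_j(x)),f_{j+1}(\mathcal{F}_j(x))\bigr)\le a_{j+1}$ correctly shows that beyond the time $N$ where $a_i<\alpha$ every $\mathcal{F}$-orbit is an $\alpha$-pseudo-orbit of the single map $f$, and your prefix/tail split renders the finitely many ``far'' maps entropy-invisible under the $\tfrac1n\log$ normalization, reducing the lemma to the Barge--Swanson identity $h_p(f)=h(f)$. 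You also rightly diagnose why the naive comparison of $\mathcal{F}_j$ with $f^j$ fails (error amplification through the modulus of continuity). The counting works modulo harmless constants, with one small correction: the prefix cells should be preimages of products of $\epsilon/2$-balls under $x\mapsto(\mathcal{F}_0(x),\dots,\mathcal{F}_{N-1}(x))$ (or cells obtained from a $\delta'$-cover of $X$ with $\delta'$ a joint modulus of continuity for the finitely many maps $\mathcal{F}_0,\dots,\mathcal{F}_{N-1}$), not a cover of $X$ itself by $\epsilon$-balls as written; either fix gives a constant $C(N,\epsilon)$ independent of $n$, which is all you use.

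The one point you should make precise rather than defer to ``routine bookkeeping'' is the appeal to Theorem \ref{tb} for the constant sequence $f_i\equiv f$. As stated in the paper, the pseudo-orbit framework (Definition \ref{deg}) uses bi-infinite sequences and the inverses $f_n^{-1}$, and the proof of Lemma \ref{lec} builds the shift on the two-sided pseudo-orbit space; this presupposes invertibility, whereas the lemma assumes only that $f$ is a continuous self-map. The one-sided adaptation of $c_m$ that you mention in passing is exactly the original theorem of Barge and Swanson \cite{[BS]} for continuous maps of compact metric spaces, so the clean repair is to cite \cite{[BS]} directly for $h_p(f)=h(f)$ in the autonomous one-sided setting (or to restrict to homeomorphisms if you insist on invoking Theorem \ref{tb}). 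With that substitution your argument is complete, and it has the merit of being self-contained within the paper's framework, at the cost of being less elementary than the direct comparison in \cite{[KMS]}.
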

By Lemma \ref{leb} and Theorem \ref{tb} we have the following result:
\begin{corollary}
Let $X$ be a compact metric space and $(X,\mathcal{F})$ be a \textit{NDS}  contains a sequence of continuous functions converging uniformly to $f$. Then $h_{p}(\mathcal{F})\leq h(f)$.
\end{corollary}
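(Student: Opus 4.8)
The plan is to combine the two ingredients that have just been established, so the argument is a direct composition rather than anything requiring new machinery. First I would apply Theorem~\ref{tb}, whose hypotheses --- $X$ a compact metric space and $(X,\mathcal{F})$ a non-autonomous dynamical system --- are exactly those assumed in the corollary. This yields the equality $h_{p}(\mathcal{F})=h(\mathcal{F})$, converting the statement about the pseudo-entropy into one about the ordinary topological entropy of the system.

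Next I would invoke Lemma~\ref{leb}. Here the relevant hypothesis is that the sequence $\mathcal{F}=\{f_i\}$ converges uniformly to a single continuous self-map $f$ of $X$; this is precisely the extra assumption that distinguishes the corollary from Theorem~\ref{tb}. Lemma~\ref{leb} then supplies $h(\mathcal{F})\leq h(f)$, where $h(f)$ denotes the classical topological entropy of the limit map $f$. Chaining the two facts gives
\[
h_{p}(\mathcal{F})=h(\mathcal{F})\leq h(f),
\]
which is the desired inequality.

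I do not expect any genuine obstacle, since both Theorem~\ref{tb} and Lemma~\ref{leb} are already available. The only point requiring care is the bookkeeping: one must verify that the hypotheses of the corollary furnish exactly what each result demands --- compactness of $X$ (together with the \textit{NDS} structure) for Theorem~\ref{tb}, and uniform convergence of $\mathcal{F}$ to $f$ for Lemma~\ref{leb} --- so that the two statements may be composed without any further argument.
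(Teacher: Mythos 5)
Your proposal is correct and coincides exactly with the paper's intended argument: the paper derives this corollary precisely by combining Theorem~\ref{tb} ($h_{p}(\mathcal{F})=h(\mathcal{F})$) with Lemma~\ref{leb} ($h(\mathcal{F})\leq h(f)$ under uniform convergence). Your hypothesis bookkeeping matches what the paper assumes, so nothing further is needed.
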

\subsection{Chain transitivity and periodic-pseudo entropy}
\begin{definition}\label{deg}
We say that the \textit{NDS} $(X,\mathcal{F})$  is $\alpha$-\emph{chain transitive} if for every $x,y\in X$ there is an $(\alpha,\mathcal{F})$-chain from $x$ to $y$ and an $(\alpha,\mathcal{F})$-chain from $y$ to $x$. The \textit{NDS} $(X,\mathcal{F})$  is chain transitive if for every $\alpha>0$, is an $\alpha$-chain transitive.
\end{definition}
\begin{lemma}\label{leg}
Let  the \textit{NDS} $(X,\mathcal{F})$  be chain transitive, then there is a positive number $K$ such that for every pair $x,y\in X$ there is a $(2\alpha,\mathcal{F})$-chain from $x$ to $y$ of length less than or equal to $K$.
\end{lemma}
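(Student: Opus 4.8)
The plan is to mimic the classical compactness argument for chain transitive homeomorphisms, exploiting the factor of $2$ in the conclusion to absorb the error committed when arbitrary endpoints are approximated by points of a fixed finite net. First I would fix $\alpha>0$ and use compactness of $X$ to choose a finite $\alpha$-net $\{z_1,\dots,z_p\}$, i.e.\ a finite set meeting every $\alpha$-ball of $X$. Since $(X,\mathcal{F})$ is chain transitive it is in particular $\alpha$-chain transitive, so for each of the finitely many ordered pairs $(z_i,z_j)$ I fix one $(\alpha,\mathcal{F})$-chain from $z_i$ to $z_j$; letting $K_0$ be the maximum of their finitely many lengths and setting $K=K_0+1$, this $K$ is the desired bound, and the essential gain is that it no longer depends on the endpoints.

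Next, given arbitrary $x,y\in X$, I would assemble a $(2\alpha,\mathcal{F})$-chain of length at most $K$ from three pieces. For the initial step I pick a net point $z_i$ with $d(f_m(x),z_i)<\alpha$, which exists because the net is $\alpha$-dense and $f_m(x)\in X$; then the single step $x\mapsto z_i$ already obeys the chain inequality with room to spare, $\alpha<2\alpha$. For the middle piece I follow the pre-selected $(\alpha,\mathcal{F})$-chain from $z_i$ to the net point $z_j$ chosen so that $d(z_j,y)<\alpha$. Finally I replace the terminal point $z_j$ of that chain by $y$ itself: if its last comparison reads $d(f_s(w),z_j)<\alpha$ for the appropriate map $f_s$ and penultimate point $w$, then $d(f_s(w),y)\le d(f_s(w),z_j)+d(z_j,y)<2\alpha$, so this last step remains legal at tolerance $2\alpha$. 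Concatenating, $x,z_i,\dots,w,y$ is a $(2\alpha,\mathcal{F})$-chain from $x$ to $y$ of length at most $1+K_0=K$.

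The step I expect to be the genuine obstacle is the non-autonomous index bookkeeping, which is invisible in the autonomous setting. Because the maps vary in time, every $(\alpha,\mathcal{F})$-chain carries a starting action index $m$, and when I splice the initial step onto the pre-selected middle chain I must guarantee that their action indices are consecutive; thus the argument really needs chain transitivity to deliver, for each ordered pair of net points, a uniformly bounded chain whose starting index matches the one forced by the first step. I would address this by recording the starting indices of the finitely many pre-selected chains and choosing $z_i$ relative to the corresponding iterate $f_m(x)$, so that the indices line up across the splice. I would also emphasize the source/target asymmetry that drives the construction: the target endpoint enters only as a point, so the triangle inequality lets me exchange $z_j$ for $y$ at no cost beyond the extra $\alpha$, whereas the source endpoint is acted on by a map, and since equi-continuity is \emph{not} assumed here I cannot perturb $x$ itself — which is precisely why the first piece approximates the image $f_m(x)$ by a net point rather than approximating $x$.
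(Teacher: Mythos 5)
Your target-side step (swapping $z_j$ for $y$ by the triangle inequality) is sound and matches what the paper does at the terminal endpoint, but there is a genuine gap on the source side, precisely at the spot you yourself flag as the obstacle. Your repair --- ``recording the starting indices of the finitely many pre-selected chains and choosing $z_i$ relative to the corresponding iterate $f_m(x)$'' --- is circular. The prepended step must use the map $f_{m_{ij}-1}$, where $m_{ij}$ is the starting action index of the pre-selected chain out of $z_i$; so to know which iterate $f_m(x)$ to approximate you must already know $i$, but $i$ is defined as the index of a net point $\alpha$-close to that very iterate. Formally, for fixed $j$ you need a fixed point of the assignment $i \mapsto (\text{net index nearest } f_{m_{ij}-1}(x))$, and nothing forces one to exist. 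You cannot escape by perturbing the source of the pre-selected chain instead (replacing $z_i$ by a nearby $x$), because that requires the net mesh to be small relative to the moduli of continuity of the chains' first maps --- but those chains are chosen between net points, so the mesh must be fixed first; without equi-continuity this is again circular, as you correctly observe. In the autonomous case your construction is the classical one and works because there is no index to match; in the non-autonomous case the splice genuinely fails as written.

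The paper's proof avoids the circularity by reversing the order of quantification: for \emph{every} pair $(a,b)\in X\times X$ it first fixes an $(\alpha,\mathcal{F})$-chain from $a$ to $b$ of some length $k(a,b)$, and only \emph{then} chooses neighborhoods $V_a$, $V_b$ adapted to that particular chain --- $V_a$ via continuity of the single map $f_m$ that the chain's first step uses (plain continuity of one fixed map, no equi-continuity), and $V_b$ via the triangle inequality as in your argument. Replacing the endpoints by $a_1\in V_a$, $b_1\in V_b$ keeps the same interior points, the same length, and crucially the \emph{same starting index}, so no index realignment ever arises; the tolerance degrades from $\alpha$ to $2\alpha$, which is exactly the slack in the statement. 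Compactness is then applied not to $X$ via a net but to $X\times X$ via the open cover $\{V_a\times V_b\}$, and $K=\max k(a_i,b_i)$ over a finite subcover. If you want to salvage your write-up, adopt this endpoint-perturbation scheme in place of the prepended step; the rest of your bookkeeping (lengths, the factor $2$) then goes through unchanged.
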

\begin{proof}
For every $(a,b)\in X\times X$, choose the number $k(a,b)$ such that there is a $(2\alpha,F)$-chain from $a$ to $b$. So, we can find open sets $V_{a},~V_{b}$  contains $a$ and $b$, respectively, such that if $a_{1}\in V_{a}$ and $b_{1}\in V_{b}$ then there is a $(2\alpha,\mathcal{F})$-chain of length $k(a,b) $ from $a_{1}$ to $b_{1}$. The collection $\{V_{a}\times V_{b}\}_{a,b\in X}$ is an open cover for $X\times X$. Compactness of $X$ implies that  the open cover $\{V_{a}\times V_{b}\}_{a,b\in X}$  for $X\times X$ has a finite subcover $\{V_{a_{i}}\times V_{b_{i}}\}$. Let $K=max\{k(a_{i},b_{i})\}$.
\end{proof}
A subset $S$ of $(\alpha,\mathcal{F})$-pseudo orbits of period $n$ is $(n,\epsilon)-$separated if, for each $\texttt{x}=\{x_i\}_{0\leq i< n}$, $\texttt{y}=\{y_i\}_{0\leq i< n}\in S$, $\texttt{x}\neq\texttt{y}$, there is a $i$, $0\leq i< n$, for which $d(x_{i},y_{i})>\epsilon$. Let $p_{n}(\epsilon,\alpha,\mathcal{F})$ denote the maximal cardinality of a $(n,\epsilon)-$separated set of $(\alpha,F)$-pseudo orbits. Since $X$ is compact then $p_{n}(\epsilon,\alpha,\mathcal{F})$ is finite.
\begin{remark}\label{reg}
Let $\alpha^{'}<\alpha$, since every $(\alpha^{'},\mathcal{F})$-pseudo orbit is an $(\alpha,\mathcal{F})$-pseudo orbit, then  $p_{n}(\epsilon,\alpha,\mathcal{F})\leq p_{n}(\epsilon,\alpha^{'},\mathcal{F})$.
\end{remark}
 The number $H_{p}(\mathcal{F})=\displaystyle \lim_{\epsilon\rightarrow 0}\lim_{\alpha\rightarrow 0}\displaystyle\limsup_{n\rightarrow \infty}\frac{1}{n}\log p_{n}(\epsilon,\alpha,\mathcal{F})$ will be called the periodic-pseudo-entropy of $\mathcal{F}$. The following theorem shows that $H_{p}(\mathcal{F})=h(\mathcal{F})$.

\begin{theorem}\label{tc}
Let  the \textit{NDS} $(X,\mathcal{F})$  be chain transitive. The topological entropy $h(\mathcal{F})$ is equal to $H_{p}(\mathcal{F})$.
\end{theorem}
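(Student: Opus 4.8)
The plan is to establish the two inequalities $H_{p}(\mathcal{F})\leq h(\mathcal{F})$ and $h(\mathcal{F})\leq H_{p}(\mathcal{F})$ separately, the first being routine and the second carrying the weight of the chain-transitivity hypothesis.

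For the upper bound I would first observe that every periodic $(\alpha,\mathcal{F})$-pseudo orbit of period $n$, read off on the coordinates $0\leq i<n$, extends to a bi-infinite $(\alpha,\mathcal{F})$-pseudo orbit without altering those coordinates. Consequently an $(n,\epsilon)$-separated set $S$ of periodic pseudo orbits injects into an $(n,\epsilon)$-separated set of elements of $X_{\alpha}$, since the separation is witnessed on exactly those coordinates. This yields $p_{n}(\epsilon,\alpha,\mathcal{F})\leq c_{n}(\epsilon,\alpha,\mathcal{F})$ for every $n,\epsilon,\alpha$, and after passing to the limits in $n$, $\alpha$ and $\epsilon$ we obtain $H_{p}(\mathcal{F})\leq h_{p}(\mathcal{F})$. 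Theorem \ref{tb} gives $h_{p}(\mathcal{F})=h(\mathcal{F})$, whence $H_{p}(\mathcal{F})\leq h(\mathcal{F})$.

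For the reverse inequality I would use chain transitivity to turn genuine finite orbit segments into periodic pseudo orbits while losing only a controlled factor. Fix $\epsilon>0$ and $\alpha>0$ and let $E$ be a maximal $(n,\mathcal{F},\epsilon)$-separated set, so $|E|=s_{n}(\mathcal{F},\epsilon)$. For $x\in E$ consider the orbit segment $x,\mathcal{F}_{1}(x),\dots,\mathcal{F}_{n-1}(x)$; by Lemma \ref{leg} there is a $(2\alpha,\mathcal{F})$-chain of length at most $K$ joining the terminal point back to $x$, with $K$ independent of $x$ and $n$. Concatenating the orbit segment with this closing chain produces a periodic $(2\alpha,\mathcal{F})$-pseudo orbit whose period $m$ lies between $n$ and $n+K$. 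Because the first $n$ coordinates coincide with the original segment, distinct elements of $E$ yield periodic pseudo orbits that remain separated on the coordinates $0\leq i<n$, hence $(m,\epsilon)$-separated. Since there are at most $K$ admissible values $m\in\{n,\dots,n+K\}$, a pigeonhole argument isolates one period $m\leq n+K$ shared by at least $s_{n}(\mathcal{F},\epsilon)/K$ of the closed-up orbits, giving $p_{m}(\epsilon,2\alpha,\mathcal{F})\geq s_{n}(\mathcal{F},\epsilon)/K$. Dividing by $m\leq n+K$, taking $\limsup_{n\to\infty}$ (the fixed constant $K$ and the term $\log K$ wash out), and then letting $\alpha\to 0$ and $\epsilon\to 0$ yields $h(\mathcal{F})\leq H_{p}(\mathcal{F})$; here Remark \ref{reg} guarantees that the $\alpha\to 0$ limit exists so that passing from the parameter $2\alpha$ back to $\alpha$ is harmless. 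Combining the two inequalities gives $H_{p}(\mathcal{F})=h(\mathcal{F})$.

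The main obstacle, as expected, is the reverse inequality. One must check that the closing-up procedure genuinely produces a \emph{periodic} pseudo orbit in the non-autonomous setting, where the defining maps vary with time; that attaching the bounded bridge does not destroy separation on the first $n$ coordinates; and that the uniformity of the constant $K$ from Lemma \ref{leg} renders the correction terms asymptotically negligible. These are precisely the points where chain transitivity, through Lemma \ref{leg}, is indispensable.
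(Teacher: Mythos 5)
Your proposal is correct and is essentially the paper's own argument: the easy direction is $H_{p}(\mathcal{F})\leq h_{p}(\mathcal{F})=h(\mathcal{F})$ via Theorem \ref{tb}, and the hard direction rests on Lemma \ref{leg}, closing $(n,\epsilon)$-separated data into periodic $(2\alpha,\mathcal{F})$-pseudo orbits of period at most $n+K$ and absorbing the bounded overhead in the limit. The only cosmetic differences are that the paper closes up separated \emph{pseudo} orbits (bounding $c_{n}(\epsilon,\alpha,\mathcal{F})\leq \sum_{i=1}^{K+n}p_{i}(\epsilon,\alpha,\mathcal{F})$ and extracting the maximal term $p_{i_{K+n}}$, which again invokes Theorem \ref{tb}) whereas you close up genuine orbit segments counted by $s_{n}(\mathcal{F},\epsilon)$ and handle the varying periods by a pigeonhole over the $K+1$ admissible values --- equivalent bookkeeping, and your appeal to Remark \ref{reg} for the $\alpha\to 0$ passage matches the paper's use of it.
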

\begin{proof}
Let $E$ be a $(n,\epsilon)-$separated set of $(\alpha,\mathcal{F})$-pseudo orbits. For every $\texttt{x}=\{x_i\}$ in $E$ we have an $\alpha-$pseudo orbit from $x_{1}$ to $x_{n}$. By Lemma \ref{leg} there is a $2\alpha$-pseudo orbit of the length at most $K$ from $x_{n}$ to $x_{1}$. So there exists a $2\alpha$-periodic-pseudo orbit of the length at most $K+n$ from $x_{1}$ to $x_{1}$. This implies that $c_{n}(\epsilon,\alpha,\mathcal{F})\leq \displaystyle\Sigma_{i=1}^{K+n}p_{i}(\epsilon,\alpha,\mathcal{F}).$ \\
For each $m\geq 1$, let $i_{m}$, $1\leq i_{m}\leq m$, be such that $p_{i_{m}}(\epsilon,2\alpha,\mathcal{F})\geq p_{i}(\epsilon,2\alpha,\mathcal{F})$, for all $1\leq i\leq m$. Then,
\begin{align*}
h(\mathcal{F})&=h_{p}(\mathcal{F})& ~~~~~~~~\text{Theorem\ref{tb}}\\
&\leq \displaystyle\limsup_{n\rightarrow \infty}\frac{1}{n}\log c_{n}(2\epsilon,\alpha,\mathcal{F})&~~~~~~~~~~~~~~ \text{Remark\ref{reg}}
\\ &\leq \displaystyle\limsup_{n\rightarrow \infty}\frac{1}{n}\log\displaystyle\Sigma_{i=1}^{K+n}p_{i}(\epsilon,2\alpha,\mathcal{F})
\\& \leq \displaystyle\limsup_{n\rightarrow \infty}\frac{1}{K+n}\log\displaystyle (K+n)p_{i_{K+n}}(\epsilon,2\alpha,\mathcal{F})
\\ &\leq \displaystyle\limsup_{n\rightarrow \infty}\frac{1}{i_{K+n}}\log\displaystyle (K+n)p_{i_{K+n}}(\epsilon,2\alpha,\mathcal{F})
\\ &\leq  \displaystyle\limsup_{n\rightarrow \infty}\frac{1}{n}\log p_{n}(\epsilon,2\alpha,\mathcal{F}).
\end{align*}
So $h(\mathcal{F})\leq H_{p}(\mathcal{F})$. On the other hand, by definitions this is clear that $H_{p}(\mathcal{F})\leq h_{p}(\mathcal{F})$ and hence by Theorem \ref{tb} $H_{p}(\mathcal{F})\leq h(\mathcal{F})$.
\end{proof}
By Lemma \ref{leb} and Theorem \ref{tc} we have the following result:
\begin{corollary}
Let $X$ be a compact metric space and $(X,\mathcal{F})$ be a chain transitive \textit{NDS}  contains a sequence of continuous functions converging uniformly to $f$. Then $H_{p}(\mathcal{F})\leq h(f)$.
\end{corollary}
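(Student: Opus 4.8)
The plan is simply to chain together the two results cited immediately before the statement, since the hypothesis of the corollary decomposes into two independent pieces, each of which feeds exactly one of those results. The first piece is that $(X,\mathcal{F})$ is chain transitive; the second is that the sequence $\mathcal{F}=\{f_i\}$ of continuous self-maps converges uniformly to $f$. I would keep these carefully separate, because $h(f)$ on the right-hand side refers to the classical topological entropy of the single limit map $f$, not to any entropy of the non-autonomous system, and the only step that brings $f$ into the picture is Lemma \ref{leb}.

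First I would apply Theorem \ref{tc}. Since $(X,\mathcal{F})$ is chain transitive by hypothesis, that theorem gives the equality $H_{p}(\mathcal{F})=h(\mathcal{F})$, replacing the periodic-pseudo-entropy with the ordinary topological entropy of the \textit{NDS}. Next I would invoke Lemma \ref{leb}: because $\mathcal{F}$ is a sequence of continuous self-maps of the compact metric space $X$ converging uniformly to $f$, the lemma yields $h(\mathcal{F})\leq h(f)$. Composing these two facts gives $H_{p}(\mathcal{F})=h(\mathcal{F})\leq h(f)$, which is precisely the asserted bound.

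There is no substantive obstacle here; the entire content is carried by the two previously established results, and the proof is a one-line composition of an equality with an inequality. The only point deserving attention—and the reason this cannot be collapsed into the earlier corollary following Theorem \ref{tb}—is that both hypotheses are genuinely used in disjoint places: chain transitivity is exactly what licenses the passage from $H_{p}(\mathcal{F})$ to $h(\mathcal{F})$ via Theorem \ref{tc}, while uniform convergence is exactly what licenses the passage from $h(\mathcal{F})$ to $h(f)$ via Lemma \ref{leb}. Neither hypothesis can be dropped without breaking the chain.
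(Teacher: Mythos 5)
Your proof is correct and is exactly the paper's argument: the paper derives this corollary by the same one-line composition, using Theorem \ref{tc} (chain transitivity gives $H_{p}(\mathcal{F})=h(\mathcal{F})$) followed by Lemma \ref{leb} (uniform convergence gives $h(\mathcal{F})\leq h(f)$). Your added remark about the two hypotheses being used in disjoint places matches the paper's intent precisely.
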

\begin{example}\label{exa}
Let $I$ be the unit interval and let $g,~h$ be defined as
\[g(x)= \left\lbrace
  \begin{array}{c l}
    2x+\frac{1}{2} & \text{for $x\in [0,\frac{1}{4}]$},\\
    -2x+\frac{3}{2} & \text{for $x\in [\frac{1}{4},\frac{3}{4}]$},\\
    2x-\frac{3}{2} & \text{for $x\in [\frac{3}{4},1]$}.\\
  \end{array}
\right. \]
\[h(x) = \left\lbrace
  \begin{array}{c l}
    x+\frac{1}{2} & \text{for $x\in [0,\frac{1}{2}]$},\\
     -4x+3 & \text{for $x\in [\frac{1}{2},\frac{3}{4}]$},\\
    2x-\frac{3}{2} & \text{for $x\in [\frac{3}{4},1]$}\\
  \end{array}
\right. \]
Consider the \textit{NDS} $(I,\mathcal{F})$, where $\mathcal{F}=\{g,h,g,h,g,\cdots\}. $ In \cite{[SR]}, the authors prove $(I,\mathcal{F})$ is transitive and hence by Theorem \ref{tc} the topological entropy $h(\mathcal{F})$ is equal to $H_{p}(\mathcal{F})$.
\end{example}
\begin{definition}\label{dek}\cite{[TDB]}
Let $(X,d)$ be a metric space and $f_{n}:X\rightarrow X$
a sequence of homeomorphisms, $n= 0, 1, 2, \cdots$. The \textit{NDS} $(X,\mathcal{F})$  is said to be expansive
if there exists a constant $e > 0$ (called an expansive constant)
such that, for any $x,~y\in X$, $x\neq y$,  $d(\mathcal{F}_{n}(x),\mathcal{F}_{n}(y))>e$ for
some $n\in N$.
\end{definition}
Fix $n\geq1$. Let $Fix(\mathcal{F}^{n})$ be the set of all point $x$ that $\mathcal{F}_{[k,n]}(x)=x$ and $\mathcal{N}(Fix(\mathcal{F}^{n}))$ denote its cardinality.
\begin{theorem}\label{td}
Let  $(X,\mathcal{F})$ be an expansive  chain transitive \textit{NDS}. If $(X,\mathcal{F})$ has the shadowing property then $h(\mathcal{F})= \displaystyle\limsup_{n\rightarrow \infty}\frac{1}{n}\log[\mathcal{N}(Fix(\mathcal{F}^{n}))].$
\end{theorem}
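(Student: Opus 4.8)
The plan is to route everything through the periodic-pseudo-entropy $H_{p}(\mathcal{F})$ and to exploit the pair (shadowing, expansiveness) to set up a two-sided comparison between genuine period-$n$ points and separated periodic $(\alpha,\mathcal{F})$-pseudo orbits. Since $(X,\mathcal{F})$ is chain transitive, Theorem \ref{tc} already gives $h(\mathcal{F})=H_{p}(\mathcal{F})$, so it suffices to show $H_{p}(\mathcal{F})=\limsup_{n\rightarrow\infty}\frac{1}{n}\log \mathcal{N}(Fix(\mathcal{F}^{n}))$. Fix the expansive constant $e>0$ and work throughout with $\epsilon<e/2$; for such $\epsilon$ expansiveness yields \emph{uniqueness of tracing}: if two points $y,z$ both $\epsilon$-trace one and the same pseudo orbit $\{x_{i}\}$, then $d(\mathcal{F}_{i}(y),\mathcal{F}_{i}(z))\leq d(\mathcal{F}_{i}(y),x_{i})+d(x_{i},\mathcal{F}_{i}(z))<2\epsilon<e$ for every $i$, which forces $y=z$.

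For the upper bound $\limsup_{n}\frac{1}{n}\log\mathcal{N}(Fix(\mathcal{F}^{n}))\leq h(\mathcal{F})$, I would observe that every genuine periodic orbit is a $0$-pseudo orbit and hence an $(\alpha,\mathcal{F})$-pseudo orbit for every $\alpha>0$. Two distinct points $x,x'\in Fix(\mathcal{F}^{n})$ are separated, by expansiveness, at some coordinate inside one period, so $d(\mathcal{F}_{k}(x),\mathcal{F}_{k}(x'))>e>\epsilon$ for some $0\leq k<n$; consequently the family of orbits of points of $Fix(\mathcal{F}^{n})$ is an $(n,\epsilon)$-separated set of periodic pseudo orbits and $\mathcal{N}(Fix(\mathcal{F}^{n}))\leq p_{n}(\epsilon,\alpha,\mathcal{F})$. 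Passing to $\limsup_{n}$, then $\alpha\rightarrow0$ and $\epsilon\rightarrow0$, gives $\limsup_{n}\frac{1}{n}\log\mathcal{N}(Fix(\mathcal{F}^{n}))\leq H_{p}(\mathcal{F})=h(\mathcal{F})$.

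For the reverse inequality I would use shadowing to turn a separated family of periodic pseudo orbits into a separated family of genuine period-$n$ points. Given $\epsilon<e/2$, choose $\delta=\alpha$ from the shadowing property so that every $\alpha$-pseudo orbit is $\epsilon$-traced. Let $\{x_{i}\}$ be a periodic $(\alpha,\mathcal{F})$-pseudo orbit of period $n$, $\epsilon$-traced by $y$. The key step is to show $y\in Fix(\mathcal{F}^{n})$: setting $z=\mathcal{F}_{n}(y)$ and using the period-$n$ structure one checks that $z$ also $\epsilon$-traces $\{x_{i}\}$, whence $z=y$ by the uniqueness observed above, i.e. $\mathcal{F}_{[1,n]}(y)=y$. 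Moreover, if two $(n,2\epsilon)$-separated periodic pseudo orbits were traced by the same point, the triangle inequality would place their separating coordinates within $2\epsilon$, a contradiction; hence distinct separated pseudo orbits yield distinct period-$n$ points and $p_{n}(2\epsilon,\alpha,\mathcal{F})\leq \mathcal{N}(Fix(\mathcal{F}^{n}))$. Taking $\limsup_{n}$ and letting $\alpha\rightarrow0$, $\epsilon\rightarrow0$ yields $H_{p}(\mathcal{F})\leq \limsup_{n}\frac{1}{n}\log\mathcal{N}(Fix(\mathcal{F}^{n}))$, and combining with the upper bound and Theorem \ref{tc} completes the argument.

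The step I expect to be the main obstacle is precisely showing that the unique point tracing a period-$n$ pseudo orbit lies in $Fix(\mathcal{F}^{n})$: the identity $\mathcal{F}_{i}(\mathcal{F}_{[1,n]}(y))=\mathcal{F}_{n+i}(y)$ that makes $z=\mathcal{F}_{n}(y)$ trace the same orbit relies on the compatibility of the composition $\mathcal{F}_{[1,n]}$ with the period-$n$ shift of the pseudo orbit. This compatibility is automatic when the defining sequence of maps is itself $n$-periodic (as in Example \ref{exa}), and it is exactly this point that needs to be stated carefully in the general case; I would isolate it as a short lemma before running the two comparisons above.
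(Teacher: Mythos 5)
Your proposal follows essentially the same route as the paper's proof: fix the expansive constant $e$, use shadowing to trace periodic pseudo orbits by genuine points, match separated families of periodic $(\alpha,\mathcal{F})$-pseudo orbits with period-$n$ points, and conclude via Theorem \ref{tc}. The difference is one of completeness: the paper's proof is three sentences long and omits exactly the bookkeeping you supply --- the comparison $\mathcal{N}(Fix(\mathcal{F}^{n}))\leq p_{n}(\epsilon,\alpha,\mathcal{F})$ for the easy direction, and the injectivity of the assignment from $(n,2\epsilon)$-separated periodic pseudo orbits to their tracing points for the hard one. Note also that your requirement $\epsilon<e/2$ is the correct one; the paper takes $\epsilon<e$, which does not support the uniqueness-of-tracing argument, since the triangle inequality produces the bound $2\epsilon$ and one needs $2\epsilon\leq e$.

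The obstacle you isolate at the end is a genuine gap, and it is a gap in the paper's proof as well, not something you overlooked: the paper simply asserts that expansivity plus shadowing yields, for each periodic pseudo orbit, ``a corresponding periodic orbit,'' with no argument. Your diagnosis of why this is delicate is exactly right. If $\{x_{i}\}$ has period $n$ and $y$ $\epsilon$-traces it, then $z=\mathcal{F}_{n}(y)$ traces the same sequence only with respect to the time-shifted system $\{f_{n+i}\}_{i\geq 1}$, whereas expansiveness of $(X,\mathcal{F})$ compares $\mathcal{F}_{i}(y)$ with $\mathcal{F}_{i}(z)=\mathcal{F}_{[1,i]}(z)$; these quantities are unrelated to $\mathcal{F}_{n+i}(y)$ unless $f_{n+i}=f_{i}$. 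So the lemma you would isolate holds when the defining sequence of maps is itself $n$-periodic (as in Example \ref{exa}) but is not available for an arbitrary \textit{NDS}, and without it the tracing point need not lie in $Fix(\mathcal{F}^{n})$ --- in general nothing even forces $Fix(\mathcal{F}^{n})$ to be nonempty. The same issue touches your upper bound: pulling the expansiveness-separating coordinate back inside one period requires the whole trajectory of a point of $Fix(\mathcal{F}^{n})$ to be $n$-periodic, which again presupposes a compatible map sequence, and the paper's definition of $Fix(\mathcal{F}^{n})$ (``all $x$ with $\mathcal{F}_{[k,n]}(x)=x$,'' with $k$ unquantified) leaves precisely this point ambiguous. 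In short, your write-up is a faithful and more careful rendering of the paper's argument, and the single step you defer to a lemma is exactly the step the paper leaves unproved; as stated, Theorem \ref{td} needs an additional standing hypothesis (for instance $f_{i+n}=f_{i}$, or uniqueness of tracing formulated uniformly over the shifted systems) for that step to go through.
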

\begin{proof}
Consider $e>0$ as the expansivity constant and let $\epsilon<e$. Since $\mathcal{F}$ has the shadowing property then there exists $\alpha<\epsilon$ such that every $\alpha-$pseudo periodic orbit is $\epsilon-$traced by some point $y$ in $X$. Up to this point and expansivity of $\mathcal{F}$ for each $\alpha-$pseudo periodic there exists a corresponding periodic orbit. Then by Theorem \ref{tc} $h(\mathcal{F})= \displaystyle\limsup_{n\rightarrow \infty}\frac{1}{n}\log[\mathcal{N}(Fix(\mathcal{F}^{n}))]$
\end{proof}
\section{Topological entropy and chain recurrent}
In this section we investigate the structure of topological mixing \textit{NDS}'s and define \emph{chain mixing time} for \textit{NDS}'s. Theorem \ref{prb}  gives lower bound of the topological entropy  in non-autonomous theory by using  the chain mixing times.
\begin{definition}\cite{[SR]}\label{deh}
 Let $X$ be a compact metric space and $(X,\mathcal{F})$ be a \textit{NDS}. The system $\mathcal{F}$ is said to be \emph{topological mixing}  if for every non-empty open sets $U,V$ there exists a natural number $K$ such that $f_{n}\circ f_{n-1}\circ....\circ f_{1}(U)\cap V\neq\emptyset,$ for all $n\geq K$.
 \end{definition}
 \begin{lemma}\label{leh}\cite{[SR]}
 The DNS $F = (X,\mathcal{F})$ is topologically mixing if and only if for each non-empty
open set $U$, $\displaystyle\lim_{n\rightarrow \infty}f_{n}\circ f_{n-1}\circ....\circ f_{1}(U)=X$
\end{lemma}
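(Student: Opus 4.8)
The plan is to read the set-convergence $\lim_{n\rightarrow\infty}\mathcal{F}_n(U)=X$ in the Hausdorff sense: since each $\mathcal{F}_n(U)\subseteq X$ and $X$ is compact, this is equivalent to saying that for every $\epsilon>0$ there is an $N$ such that $\mathcal{F}_n(U)$ is $\epsilon$-dense in $X$ (every point of $X$ lies within $\epsilon$ of a point of $\mathcal{F}_n(U)$) whenever $n\geq N$. Recall also that $\mathcal{F}_n=f_n\circ f_{n-1}\circ\cdots\circ f_1$, so the two formulations in the statement are written with the same map. With this reading fixed, I would prove the two implications separately.

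For the implication ``$\lim$-condition $\Rightarrow$ mixing'' I would take arbitrary non-empty open sets $U,V$, fix a point $v\in V$ and a radius $\epsilon>0$ with $B(v,\epsilon)\subseteq V$, and apply the hypothesis to $U$. There is then an $N$ such that for all $n\geq N$ the set $\mathcal{F}_n(U)$ is $\epsilon$-dense; in particular it meets $B(v,\epsilon)\subseteq V$, so $\mathcal{F}_n(U)\cap V\neq\emptyset$. Taking $K=N$ gives exactly the defining condition of topological mixing in Definition \ref{deh}.

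For the converse ``mixing $\Rightarrow$ $\lim$-condition'' I would fix a non-empty open $U$ and $\epsilon>0$, and use compactness of $X$ to extract a finite subcover $B(x_1,\epsilon),\dots,B(x_m,\epsilon)$ of $X$ by $\epsilon$-balls. Applying topological mixing to each pair $(U,B(x_j,\epsilon))$ yields thresholds $K_j$; setting $K=\max_{1\leq j\leq m}K_j$, for every $n\geq K$ the set $\mathcal{F}_n(U)$ meets every ball $B(x_j,\epsilon)$. Since these balls cover $X$, any $x\in X$ lies in some $B(x_j,\epsilon)$, whence $\dist(x,\mathcal{F}_n(U))<2\epsilon$; thus $\mathcal{F}_n(U)$ is $2\epsilon$-dense for all $n\geq K$. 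As $\epsilon>0$ was arbitrary, $\mathcal{F}_n(U)\rightarrow X$ in the Hausdorff metric.

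The only genuine obstacle is the first, interpretive step: the statement abbreviates a set-limit, and the argument works only once one commits to Hausdorff convergence and exploits compactness to upgrade the ball-by-ball mixing condition into a \emph{uniform} density statement (the finite subcover and the resulting maximum of thresholds). Beyond that, the two inclusions are elementary $\epsilon$-ball manipulations, and in fact continuity of the $f_i$ is not needed for the equivalence itself, only the compactness of $X$.
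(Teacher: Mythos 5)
Your proof is correct. Note first that the paper itself offers no argument for this lemma: it is quoted from \cite{[SR]} without proof, so there is no internal proof to compare against, and your write-up supplies exactly what the citation elides. Your interpretive commitment is the right one: reading $\lim_{n\rightarrow\infty}\mathcal{F}_{n}(U)=X$ as Hausdorff convergence, which for subsets $A_n\subseteq X$ of a compact space reduces to eventual $\epsilon$-density of $A_n$ for every $\epsilon>0$ (the other half of the Hausdorff distance vanishes automatically since $A_n\subseteq X$). Both implications are then carried out soundly: the forward direction is the standard ball-inside-$V$ argument, and the converse correctly identifies the one non-trivial point, namely that the definition of mixing gives only a ball-by-ball threshold, and compactness (finite subcover, maximum of the $K_j$) is what upgrades this to the uniform density that Hausdorff convergence demands; without compactness the pointwise Kuratowski-type condition and the Hausdorff condition would genuinely differ. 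Two minor remarks: you could take balls of radius $\epsilon/2$ to land on $\epsilon$-density directly rather than $2\epsilon$-density, though this is cosmetic since $\epsilon$ is arbitrary; and your closing observation is accurate --- the equivalence uses only compactness of $X$ and set images, not continuity of the $f_i$, which is mildly stronger than what the statement in \cite{[SR]} advertises.
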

\begin{corollary}\label{cob}
The DNS $F = (X,\mathcal{F})$ is topologically mixing if and only if for each $\epsilon>0$ there exists $N_{\epsilon}>0$ such that for any $x,y\in X$ and any $n\geq N_{\epsilon}>0$ there is an $\epsilon-$pseudo-orbit from $x$ to $y$ of length exactly $n$.
\end{corollary}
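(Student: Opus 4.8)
The plan is to establish the two implications separately; the real content lives in the forward implication (topological mixing $\Rightarrow$ the pseudo-orbit condition), where a single $N_{\epsilon}$ must supply $\epsilon$-pseudo-orbits of \emph{every} length $n\geq N_{\epsilon}$ between \emph{every} ordered pair of points, so the work is entirely in making the threshold uniform. First I would fix $\epsilon>0$ and exploit the (automatic) uniform continuity of the first map $f_{1}$ on the compact space $X$ to pick $\delta>0$ with $d(f_{1}(a),f_{1}(b))<\epsilon$ whenever $d(a,b)<\delta$. Next I would take a finite $\eta$-net $c_{1},\dots,c_{k}$ of $X$ with $\eta=\min\{\delta/2,\epsilon/2\}$ and apply topological mixing (cf. Lemma \ref{leh}) to each ordered pair of balls $U_{i}=B(c_{i},\delta/2)$, $V_{j}=B(c_{j},\epsilon/2)$, obtaining integers $K_{ij}$ with $\mathcal{F}_{m}(U_{i})\cap V_{j}\neq\emptyset$ for all $m\geq K_{ij}$. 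Setting $N_{\epsilon}=\max_{i,j}K_{ij}$, a finite maximum, absorbs the required uniformity over pairs.

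Given arbitrary $x,y\in X$ and $n\geq N_{\epsilon}$, I would select indices $i,j$ with $d(x,c_{i})<\delta/2$ and $d(y,c_{j})<\epsilon/2$, and use mixing (with $m=n$) to produce $z\in U_{i}$ with $\mathcal{F}_{n}(z)\in V_{j}$. The candidate pseudo-orbit is $z_{0}=x$, $z_{t}=\mathcal{F}_{t}(z)$ for $1\leq t\leq n-1$, and $z_{n}=y$. I would then check the three kinds of steps: the opening step costs $d(f_{1}(z_{0}),z_{1})=d(f_{1}(x),f_{1}(z))<\epsilon$, which is exactly why $z$ was taken within $\delta$ of $x$ (here $d(x,z)\leq d(x,c_{i})+d(c_{i},z)<\delta$); every intermediate step is exact, since $z_{t+1}=\mathcal{F}_{t+1}(z)=f_{t+1}(z_{t})$; and the closing step costs $d(f_{n}(z_{n-1}),z_{n})=d(\mathcal{F}_{n}(z),y)<\epsilon$ because $\mathcal{F}_{n}(z)\in V_{j}$ and $y$ lies within $\epsilon/2$ of $c_{j}$. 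Thus $\{z_{t}\}_{t=0}^{n}$ is an $\epsilon$-pseudo-orbit from $x$ to $y$ of length exactly $n$. A feature worth recording is that this uses only continuity of the \emph{first} map, not equicontinuity of the whole family, since the only nonexact steps are the two ends.

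For the converse I would begin with nonempty open sets $U,V$, fix $x\in U$ and $y\in V$ together with small radii so that suitable balls around $x$ and $y$ sit inside $U$ and $V$, and feed these into the hypothesis to obtain $\epsilon$-pseudo-orbits from $x$ to $y$ of all lengths $m\geq N_{\epsilon}$. To conclude $\mathcal{F}_{m}(U)\cap V\neq\emptyset$ I must replace each \emph{approximate} orbit by a genuine orbit that starts in $U$ and lands in $V$, and this is the step I expect to be the main obstacle: an $\epsilon$-pseudo-orbit need not be near any true orbit, and since its per-step errors may accumulate over the arbitrarily large length $m$, one cannot fix $m$ and let $\epsilon\to 0$ (the threshold $N_{\epsilon}$ grows as $\epsilon$ shrinks). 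The natural remedy is the shadowing property (P.O.T.P.), which is in force throughout this section: tracing the pseudo-orbit by a genuine point $w$ puts $w\in U$ and $\mathcal{F}_{m}(w)\in V$, which is mixing. I would emphasize that some such upgrading is genuinely needed — the identity map on $[0,1]$ satisfies the pseudo-orbit condition yet is not topologically mixing — so the converse should be read inside the shadowing framework, while it is the forward implication that actually provides the chain/pseudo-orbit description of mixing used later for the entropy lower bound.
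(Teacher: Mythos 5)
The paper offers no proof of this corollary at all --- it is stated bare, as if immediate from Lemma \ref{leh} --- so your forward argument supplies exactly the missing content, and it is correct: the finite $\eta$-net, the finitely many mixing constants $K_{ij}$ absorbed into $N_{\epsilon}=\max_{i,j}K_{ij}$, and the uniform continuity of $f_{1}$ to pay for the single inexact opening step are precisely the compactness bookkeeping the paper skips, and your three-step verification of the chain (inexact first step, exact middle, inexact last step via $\mathcal{F}_{n}(z)\in V_{j}$) checks out against the convention $\mathcal{F}_{n}=f_{n}\circ\cdots\circ f_{1}$ used in Definition \ref{deh}. Your observation that only continuity of $f_{1}$ is needed is a genuine sharpening.

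More importantly, your diagnosis of the converse is right and exposes a real defect in the statement as printed: the ``if'' direction is false without further hypotheses, and your counterexample works --- for $f_{n}=\mathrm{id}$ on $[0,1]$ one gets $\epsilon$-pseudo-orbits between any two points of every length $n\geq N_{\epsilon}\approx 1/\epsilon$ (small steps, then padding), yet the system is certainly not topologically mixing, and no proof could be extracted from Lemma \ref{leh}. Two qualifications to your framing. First, shadowing is \emph{not} in force in this section of the paper: it appears only as a hypothesis in Theorem \ref{td}, so the corollary as stated is simply incorrect in one direction rather than implicitly conditional; note, though, that the paper only ever uses the forward implication (it is what makes the chain mixing time $m_{\epsilon}(\delta,\mathcal{F})$ of Definition \ref{dei} finite and feeds Theorem \ref{prb} and Proposition \ref{prf}), so the later results survive on your forward argument alone. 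Second, in your shadowing-based repair of the converse there is a small step to make explicit: Definition \ref{deg} phrases the shadowing property for bi-infinite pseudo-orbits, so the finite $\epsilon$-chain from $x$ to $y$ must first be extended to a bi-infinite $\epsilon$-pseudo-orbit (exact backward orbit of $x$, exact forward orbit of $y$, using that the $f_{n}$ are homeomorphisms there) before the tracing point $w$ with $w\in U$ and $\mathcal{F}_{m}(w)\in V$ can be produced. With that addendum your corrected two-sided statement is the honest version of the corollary.
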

\begin{definition}\label{dei}
If $0<\epsilon<\delta$ and $x\in X$, define the chain mixing time $m_{\epsilon}(x,\delta,\mathcal{F})$ to be the smallest $N$ such that for any $n\geq N$ and any $y\in X$, there is an $\epsilon-$chain of length exactly $n$ from some point in $B_{\delta}(x)$ to $y$. We define $m_{\epsilon}(\delta,\mathcal{F})$ to be the maximum over all $x$ of $m_{\epsilon}(x,\delta,\mathcal{F})$.
\end{definition}
The compactness of $X$ implies the existence of  the number $m_{\epsilon}(\delta,\mathcal{F})$.
\begin{remark}
Let $\epsilon>0$ and $a\in X$. We define $(B_{\epsilon}o f_{n})(a)= \{x\in X: d(f_{n}(a),x)<\epsilon\}$ and for $U\subset X$, $(B_{\epsilon}o f_{n})(U)=\bigcup_{a\in U}(B_{\epsilon}o f_{n})(a)$.\\
 Put $$(B_{\epsilon}o \mathcal{F}_{2})(a)=(B_{\epsilon}o f_{2})((B_{\epsilon}o f_{1}(a))$$and
$$(B_{\epsilon}o \mathcal{F}_{n})(a)=(B_{\epsilon}o f_{n})((B_{\epsilon}o \mathcal{F}_{n-1}(a)))$$
for all $n\geq 1$.\\
So, $m_{\epsilon}(x,\delta,\mathcal{F})$ is the smallest $N$ such that $(B_{\epsilon}o \mathcal{F}_{N})(B_{\delta}(x))=X$.
\end{remark}
\begin{definition}
We say that the \emph{NDS}, $\mathcal{F}$ is Lipschitz with Lipschitz constant $c$, if $d(f_{n}(x),f_{n}(y))\leq c d(x,y)$, for all $n\geq 1$ and $x,y \in X$.
\end{definition}
\begin{theorem}\label{prb}
Let $\mathcal{F}$ be chain mixing and have Lipschitz constant $c$. Let $D$ be the diameter of $X$. Then for $\delta$ sufficiently small, $m_{\epsilon}(\delta,\mathcal{F})\geq log_{c}(\frac{D(c-1)+2\epsilon}{2\delta(c-1)+2\epsilon})$ if $c>1$ and $m_{\epsilon}(\delta,\mathcal{F})\geq \frac{D-2\delta}{2\epsilon}$ if $c=1$.
\end{theorem}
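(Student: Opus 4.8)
The plan is to control how fast the reachable set $(B_{\epsilon}\circ \mathcal{F}_{n})(B_{\delta}(x))$ can spread out in diameter, and to compare this growth rate with the diameter $D$ of $X$, which the set must attain once $n=m_{\epsilon}(x,\delta,\mathcal{F})$. Write $S_{0}=B_{\delta}(x)$ and $S_{n}=(B_{\epsilon}\circ f_{n})(S_{n-1})$, so that $S_{n}=(B_{\epsilon}\circ \mathcal{F}_{n})(B_{\delta}(x))$, and set $D_{n}=\operatorname{diam}(S_{n})$. The first observation is simply that $D_{0}\leq 2\delta$, since the $\delta$-ball $B_{\delta}(x)$ has diameter at most $2\delta$.

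The core step is a one-step estimate for $D_{n}$. Because $S_{n}$ is the open $\epsilon$-neighborhood of $f_{n}(S_{n-1})$, any two of its points lie within $\epsilon$ of points of $f_{n}(S_{n-1})$, so the triangle inequality gives $\operatorname{diam}(S_{n})\leq \operatorname{diam}(f_{n}(S_{n-1}))+2\epsilon$; and the Lipschitz hypothesis gives $\operatorname{diam}(f_{n}(S_{n-1}))\leq c\,\operatorname{diam}(S_{n-1})$. Combining these yields the linear recursion $D_{n}\leq cD_{n-1}+2\epsilon$ valid for every $n\geq 1$.

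Next I would solve this recursion. For $c>1$ I would rewrite it as $D_{n}+\tfrac{2\epsilon}{c-1}\leq c\bigl(D_{n-1}+\tfrac{2\epsilon}{c-1}\bigr)$ and iterate, obtaining $D_{N}\leq c^{N}\bigl(2\delta+\tfrac{2\epsilon}{c-1}\bigr)-\tfrac{2\epsilon}{c-1}$; for $c=1$ the recursion is additive and gives $D_{N}\leq 2\delta+2N\epsilon$. Now put $N=m_{\epsilon}(x,\delta,\mathcal{F})$: by the reformulation recorded after Definition \ref{dei}, $S_{N}=(B_{\epsilon}\circ \mathcal{F}_{N})(B_{\delta}(x))=X$, whence $D_{N}=D$. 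Substituting $D_{N}=D$ into the two estimates and solving for $N$ gives, after clearing denominators, $N\geq \log_{c}\!\bigl(\tfrac{D(c-1)+2\epsilon}{2\delta(c-1)+2\epsilon}\bigr)$ when $c>1$ and $N\geq \tfrac{D-2\delta}{2\epsilon}$ when $c=1$. Since the argument uses nothing special about $x$, the bound holds for every $x$, and hence for $m_{\epsilon}(\delta,\mathcal{F})=\max_{x}m_{\epsilon}(x,\delta,\mathcal{F})$.

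The calculation itself is routine; the points that need care are the interpretation of $m_{\epsilon}(x,\delta,\mathcal{F})$ through the operator $(B_{\epsilon}\circ \mathcal{F}_{N})$, so that reaching all of $X$ forces $D_{N}=D$ exactly, and the meaning of the clause ``$\delta$ sufficiently small.'' The latter is precisely what guarantees $2\delta<D$, so that the fraction inside the logarithm exceeds $1$ and both lower bounds are genuinely positive rather than vacuous; the chain mixing hypothesis enters only to ensure that $m_{\epsilon}(\delta,\mathcal{F})$ is a finite number (via the compactness remark), so that the asserted inequality is meaningful.
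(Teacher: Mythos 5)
Your proposal is correct and follows essentially the same argument as the paper: the one-step estimate $\operatorname{diam}(S_{n})\leq c\,\operatorname{diam}(S_{n-1})+2\epsilon$ iterated from $\operatorname{diam}(B_{\delta}(x))\leq 2\delta$, then setting $N=m_{\epsilon}(x,\delta,\mathcal{F})$ so that the reachable set is all of $X$ and solving for $N$. If anything, your write-up is slightly more complete than the paper's, since you derive the $c=1$ bound explicitly and note where chain mixing and the smallness of $\delta$ actually enter, whereas the paper's proof only carries out the $c>1$ computation.
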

\begin{proof}
By definition  of $B_{\epsilon}o \mathcal{F}_{n}$ this is clear that \\$diam((B_{\epsilon}o f_{1})(B_{\delta}(x))\leq c(2\delta)+2\epsilon.$ So,  $diam((B_{\epsilon}o \mathcal{F}_{2})(B_{\delta}(x))\leq c(c(2\delta)+2\epsilon)+2\epsilon.$ Then by induction $diam((B_{\epsilon}o \mathcal{F}_{n})(B_{\delta}(x))\leq c^{n}(2\delta)+c^{n-1}(2\epsilon)+c^{n-2}(2\epsilon)+...+2\epsilon\leq c^{n}(2\delta)+\frac{1-c^{n}}{1-c}(2\epsilon).$ \\Since $m_{\epsilon}(x,\delta,\mathcal{F})$ is the smallest $N$ such that $(B_{\epsilon}o \mathcal{F}_{n})(B_{\delta}(x))=X$. Then $N$ is at least $log_{c}(\frac{D(c-1)+2\epsilon}{2\delta(c-1)+2\epsilon}).$
\end{proof}
\begin{proposition}\label{prf}
Let $F = (X,\mathcal{F})$ be a topological mixing \textit{NDS}. Then the topological entropy $h(\mathcal{F})$, satisfies
 $$h(\mathcal{F})\geq d^{'}. \displaystyle\lim\sup_{\delta\rightarrow 0}\frac{log(\frac{1}{\delta})}{\displaystyle\lim\sup_{\epsilon\rightarrow 0}m_{\epsilon}(\delta,\mathcal{F})}$$
where $d^{'}$ is the lower box dimension of $X$.
\end{proposition}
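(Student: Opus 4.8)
The plan is to bound $h(\mathcal{F})$ from below by exhibiting exponentially many pairwise separated pseudo orbits, obtained by splicing together short $\alpha$-chains supplied by the mixing hypothesis. Since Theorem \ref{tb} gives $h(\mathcal{F})=h_{p}(\mathcal{F})=\lim_{\beta\to 0}\lim_{\alpha\to 0}\limsup_{n}\frac{1}{n}\log c_{n}(\beta,\alpha,\mathcal{F})$, it suffices to produce a good lower bound for the separated-pseudo-orbit counts $c_{n}(\beta,\alpha,\mathcal{F})$ (here $\beta$ is the separation scale and $\alpha$ the pseudo-orbit scale). For $\delta>0$ let $S(\delta)$ be the maximal cardinality of a $\delta$-separated subset of $X$; the lower box dimension is $d'=\liminf_{\delta\to 0}\frac{\log S(\delta)}{\log(1/\delta)}$, and replacing $\delta$ by $4\delta$ alters neither this value, since $\frac{\log(1/(4\delta))}{\log(1/\delta)}\to 1$. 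Fix a maximal $4\delta$-separated set $\{y_{1},\dots,y_{M}\}$ with $M=S(4\delta)$; these serve as the check points through which the constructed pseudo orbits are routed.

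Fix $\alpha$ with $0<\alpha<\delta$, put $m=m_{\alpha}(\delta,\mathcal{F})$ (finite by compactness and Corollary \ref{cob}), and recall from the Remark following Definition \ref{dei} that $(B_{\alpha}\circ\mathcal{F}_{m})(B_{\delta}(x))=X$ for every $x$, i.e. from any ball $B_{\delta}(x)$ one reaches any target in exactly $m$ steps by an $\alpha$-chain. The delicate point, which I expect to be the main obstacle, is that mixing controls only the \emph{end} of each chain, while its \emph{start} is pinned merely to lie in the prescribed $\delta$-ball; a naive concatenation would introduce a junction error at every check point and spoil the pseudo-orbit tolerance. I would sidestep this by building the $k$ segments from last to first. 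Given labels $(i_{1},\dots,i_{k})$, set $v_{k}=y_{i_{k}}$ and, for $j=k,k-1,\dots,1$, apply mixing from $B_{\delta}(y_{i_{j-1}})$ to obtain an $\alpha$-chain of length $m$ whose endpoint is \emph{exactly} the already-chosen point $v_{j}$ and whose initial point $a_{j}\in B_{\delta}(y_{i_{j-1}})$ is then declared to be $v_{j-1}:=a_{j}$. Concatenating these segments yields a genuine $\alpha$-pseudo orbit of length $km$ with no junction correction, because the end of segment $j-1$ equals the start of segment $j$, namely $v_{j-1}$; moreover at each time $jm$ this pseudo orbit lies within $\delta$ of $y_{i_{j}}$.

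If two label sequences differ in a coordinate $j$ with $1\le j\le k-1$, then at time $jm<km$ the two pseudo orbits lie within $\delta$ of the $4\delta$-separated points $y_{i_{j}}\ne y_{i'_{j}}$, hence are more than $2\delta$ apart; varying the labels in positions $1,\dots,k-1$ thus gives $M^{k-1}$ pairwise $(km,2\delta)$-separated $\alpha$-pseudo orbits, so $c_{km}(2\delta,\alpha,\mathcal{F})\ge S(4\delta)^{k-1}$. Letting $k\to\infty$ yields $\limsup_{n}\frac{1}{n}\log c_{n}(2\delta,\alpha,\mathcal{F})\ge \frac{\log S(4\delta)}{m_{\alpha}(\delta,\mathcal{F})}$. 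Since $m_{\alpha}(\delta,\mathcal{F})$ is monotone as $\alpha\downarrow 0$, sending $\alpha\to 0$ and then taking the separation scale $\beta=2\delta\to 0$, and invoking Theorem \ref{tb}, gives $h(\mathcal{F})\ge\limsup_{\delta\to 0}\frac{\log S(4\delta)}{\limsup_{\epsilon\to 0}m_{\epsilon}(\delta,\mathcal{F})}$ (writing $\epsilon$ for the chain scale $\alpha$); the monotone inner quantity makes the outer limit exist, so evaluating along a sequence $\delta_{j}\to 0$ realizing the outer $\limsup$ transfers the pointwise inequality to the limit. Finally I would factor $\frac{\log S(4\delta)}{\limsup_{\epsilon}m_{\epsilon}(\delta,\mathcal{F})}=\frac{\log(1/\delta)}{\limsup_{\epsilon}m_{\epsilon}(\delta,\mathcal{F})}\cdot\frac{\log S(4\delta)}{\log(1/\delta)}$ and, choosing $\delta_{j}\to 0$ realizing the $\limsup$ of the first factor, use $\liminf_{\delta\to0}\frac{\log S(4\delta)}{\log(1/\delta)}=d'$ along $\delta_{j}$ to pull out the constant $d'$, which produces exactly the asserted inequality. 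Everything after the splicing step is bookkeeping with monotone limits together with this single subsequence extraction for the product with $d'$.
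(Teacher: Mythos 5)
Your proposal takes essentially the same route as the paper's own proof: reduce to pseudo-entropy via Theorem \ref{tb}, splice $\epsilon$-chains of length $m_{\epsilon}(\delta,\mathcal{F})$ through a separated net of checkpoints to get exponentially many $(km,\cdot)$-separated pseudo orbits (the paper routes through a $3\delta$-separated set and gets $c_{km_{\epsilon}(\delta)}(\epsilon,\delta,\mathcal{F})\geq N(3\delta)^{k+1}$, you use $4\delta$ and get $S(4\delta)^{k-1}$), and then convert the separated-set count into the lower box dimension. Your backwards construction of the segments so that junctions match exactly, and your subsequence bookkeeping for the $\liminf$ defining $d'$, are just explicit renderings of two steps the paper passes over quickly (it merely asserts the existence of the linking points $y_{i_j}$ and cites the proof of Theorem 28 in \cite{[RW]} for $N(\alpha)\geq C(\frac{1}{\alpha})^{d'}$), so the argument is correct to the same standard as the paper's.
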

\begin{proof}
Consider $c_{n}(\epsilon,\alpha,\mathcal{F})$ as the maximal cardinality of a $(n,\epsilon)-$separated set of $(\alpha,\mathcal{F})$-pseudo orbits. For $\alpha>0$, let $N(\alpha)=c_{0}(0,\alpha,\mathcal{F})$.\\Let $x_{1},\cdots, x_{N(3\delta)}$ be a $3\delta-$ separated set of points. Fixed $k\geq0$, for each sequence $(i_{0},\cdots,i_{k}),$ where $1\geq i_{j}\leq N(3\delta)$, there is a sequence of points $y_{i_{1}},\cdots, y_{i_{k}}$ such that for each $j$, $y_{i_{j}}\in B_{\delta}(x_{i})$ and there is an $\epsilon-$pseudo-orbit of length $m_{\epsilon}(\delta)$ from $y_{i_{j}}$ to $y_{i_{j+1}}$. Since the points $x_{i}$ are $3\delta-$separated, the sequences $(y_{i_{1}},\cdots, y_{i_{k}})$ are $\delta-$separated . Then $c_{km_{\epsilon}(\delta)}(\epsilon,\delta,\mathcal{F})\geq (N(3\delta))^{k+1}$.\\ By the proof of  Theorem 28 in \cite{[RW]}, for small enough $\alpha$ there exists a positive constant $C$ such that $N(\alpha)\geq C(\frac{1}{\alpha})^{d^{'}}$. Then
\begin{align*}
h(\mathcal{F})&=\displaystyle \lim_{\delta\rightarrow 0}\lim_{\epsilon\rightarrow 0}\displaystyle\lim\sup_{n\rightarrow \infty}\frac{1}{n}log c_{n}(\epsilon,\alpha,\mathcal{F})&
 ~~~~~~~~\text{~~~~~~~~~~Theorem~\ref{tb}}\\
&\geq\displaystyle\limsup_{n\rightarrow \infty}\frac{1}{km_{\epsilon}(\delta)}log (N(3\delta))^{k+1}\\&=
 \displaystyle\limsup_{\delta\rightarrow 0}\lim_{\epsilon\rightarrow 0}\frac{\log N(3\delta)}{m_{\epsilon}(\delta,\mathcal{F})}\\&
 \geq \displaystyle\limsup_{\delta\rightarrow 0}\lim_{\epsilon\rightarrow 0}\frac{\log\frac{C}{(3\delta)^{d^{'}}} }{m_{\epsilon}(\delta,\mathcal{F})}\\&
 =d^{'}.\displaystyle\limsup_{\delta\rightarrow 0}\frac{\log\frac{1}{\delta}}{\lim_{\epsilon\rightarrow 0}m_{\epsilon}(\delta,\mathcal{F})}
\end{align*}
\end{proof}


\end{document}